\title{The noncommutative geometry \\ of frame bundles}
\author{Stefan Wagner}
\date{}
	\newlist{equivalence}{enumerate}{1}
	\setlist[equivalence]{label=(\alph*)}
\tikzstyle{every picture}+=[remember picture,inner xsep=0,inner ysep=0.25ex]
\newtheoremstyle{plain}
  {\topsep}   % ABOVESPACE
  {\topsep}   % BELOWSPACE
  {\itshape}  % BODYFONT
  {0pt}       % INDENT (empty value is the same as 0pt)
  {\bfseries\sffamily} % HEADFONT
  {.}         % HEADPUNCT
  {5pt plus 1pt minus 1pt} % HEADSPACE
  {}          % CUSTOM-HEAD-SPEC
\newtheoremstyle{definition}
  {\topsep}   % ABOVESPACE
  {\topsep}   % BELOWSPACE
  {\normalfont}  % BODYFONT
  {0pt}       % INDENT (empty value is the same as 0pt)
  {\bfseries\sffamily} % HEADFONT
  {.}         % HEADPUNCT
  {5pt plus 1pt minus 1pt} % HEADSPACE
  {}          % CUSTOM-HEAD-SPEC
\newtheoremstyle{remark}
  {0.5\topsep}   % ABOVESPACE
  {0.5\topsep}   % BELOWSPACE
  {\normalfont}  % BODYFONT
  {0pt}       % INDENT (empty value is the same as 0pt)
  {\itshape} % HEADFONT
  {.}         % HEADPUNCT
  {5pt plus 1pt minus 1pt} % HEADSPACE
  {}          % CUSTOM-HEAD-SPEC
\theoremstyle{plain}
	\newtheorem{theorem}{Theorem}[section]
	\newtheorem{lemma}[theorem]{Lemma}
	\newtheorem{corollary}[theorem]{Corollary}
\theoremstyle{definition}
	\newtheorem{defn}[theorem]{Definition}
\theoremstyle{remark}
	\newtheorem{remark}[theorem]{Remark}
\newcommand{\setword}[2]{%
  \phantomsection
  #1\def\@currentlabel{\unexpanded{#1}}\label{#2}%
}
\newdimen\bibindent
\newdimen\bibspacing
\newcommand{\raisemath}[1]{\mathpalette{\raisem@th{#1}}}
\newcommand{\raisem@th}[3]{\raisebox{#1}{$#2#3$}}
\newcommand*{\Z}{\mathbb{Z}}		% integers
\newcommand*{\R}{\mathbb{R}}		% real numbers
\newcommand*{\C}{\mathbb{C}}		% complex numbers
\newcommand*{\tensor}{\otimes}		% tensor product
\DeclarePairedDelimiter{\scal}{\langle}{\rangle}	% scalar product
\DeclareMathOperator{\id}{id}		% identity operator
\DeclareMathOperator{\SU}{SU}
\DeclareMathOperator{\SO}{SO}
\DeclareMathOperator{\GL}{GL}
\DeclareMathOperator{\ran}{ran}
\newcommand{\cf}{\mbox{cf.}\xspace}				% confer 
\newcommand{\eg}{\mbox{e.\,g.}\xspace}			% exempli gratia	
\newcommand*{\etal}{\mbox{et\,al.}\xspace}		% et alii
\newcommand*{\ie}{\mbox{i.\,e.}\xspace}			% id est
\newcommand*{\Star}{$^*$\nobreakdash}
\newcommand*{\hilb}{\mathfrak}		% style for Hilbert spaces
\newcommand*{\alg}{\mathcal}		% style for algebras
\newcommand*{\hH}{\hilb H}			% 1st standard Hilbert space
\newcommand*{\one}{1}				% unit of an algebra
\newcommand*{\aA}{\alg{A}}			% 1st standard C*-algebra
\newcommand*{\aB}{\alg{B}} 			% 2nd standard C*-algebra
\newcommand*{\End}{\mathcal L}
\newcommand*{\Cont}{C}
\DeclareMathOperator{\Irrep}{Irr} 			% equivalence classes of irreducible group representation
\newcommand*{\acts}{\,.\,}
\DeclarePairedDelimiterXPP{\lprod}[2]
	{\,_{#1}}						% precode
	{\langle}						% left delimiter
	{\rangle}						% right delimiter
	{}								% postcode
	{#2}							% body
\DeclarePairedDelimiterX{\rprod}[2]{\langle}{\rangle_{#1}}{#2}
\DeclarePairedDelimiterX{\ketbra}[2]{\lvert}{\rvert}{#1 \delimsize\rangle \delimsize\langle #2}
\DeclarePairedDelimiterX{\braket}[2]{\langle}{\rangle}{#1 \delimsize\vert #2}
\DeclarePairedDelimiterX{\matrixel}[3]{\langle}{\rangle}{#1 \delimsize\vert #2 \delimsize\vert #3}
\DeclareMathOperator{\Ext}{Ext}
\DeclareMathOperator{\Pic}{Pic}
\DeclareMathOperator{\corr}{Corr}
\DeclareMathOperator{\rep}{Rep}
\DeclareMathOperator{\Fr}{Fr}
\DeclareMathOperator{\Iso}{Iso}
\DeclareMathOperator{\Hom}{Hom}
\newcommand{\doubleitem}{%
  \begingroup
  \stepcounter{enumi}%
  \edef\tmp{\theenumi,}%
  \stepcounter{enumi}
  \edef\tmp{\endgroup\noexpand\item[\tmp\labelenumi]}%
  \tmp}
 \newcommand{\tripleitem}{% 
 \begingroup \stepcounter{enumi}% 
 \edef\tmp{\theenumi,}% 
 \stepcounter{enumi} 
 \edef\tmpt{\theenumi,} 
 \stepcounter{enumi} 
 \edef\tmp{\endgroup\noexpand\item[\tmp\tmpt\labelenumi]}% 
 \tmp}
\begin{document}

\author{
%	Kay Schwieger \thanks{
%		iteratec GmbH, Stuttgart, 
%		\href{mailto:kay.schwieger@gmail.com}%{\nolinkurl{kay.schwieger@gmail.com}}
%	} \and 
	Stefan Wagner \thanks{
		Blekinge Tekniska H\"ogskola,
		\href{mailto:stefan.wagner@bth.se}{\nolinkurl{stefan.wagner@bth.se}}
	}
}

\sloppy
\maketitle

\begin{abstract}
	\noindent
	We apply ourselves to the noncommutative geometry of frame bundles by showing that each C\Star-algebraic noncommutative principal $\SO(n)$-bundle is, up to isomorphism, uniquely determined by its associated noncommutative vector bundle with respect to the standard representation of $\SO(n)$.
For this, we provide a construction procedure, via unitary tensor functors, that for a certain type of correspondence, let's say~$M$, attaches a free C\Star-dynamical system $(\aA_M,\SO(n),\alpha_M)$ with the property that its associated noncommutative vector bundle with respect to the standard representation of $\SO(n)$ is isomorphic to~$M$.

	\vspace*{0.3cm}

	\noindent
	MSC2010: 58B34, 46L87 (primary), 55R10 (secondary)
	
	\noindent
	Keywords: Noncommutative principal bundle, unitary tensor functor, correspondence, frame bundle, associated vector bundle, rotation group.
\end{abstract}

\listoffixmes
%\tableofcontents

\section{Introduction}\label{sec:intro}

\enlargethispage{\baselineskip}
Vector bundles in classical geometry typically arise as objects associated with something more profound, a principal bundle.
In particular, each vector bundle $E$ with fibre~$V$ is naturally associated with a principal $\GL(V)$-bundle, called the \emph{frame bundle} of $E$ and denoted by $\Fr(E)$.
Frame bundles thus constitute a key tool for studying vector bundles.
Indeed, given a vector bundle $E$, its frame bundle can be utilized in order to attach to~$E$ several new vector bundles in a functorial manner.
Furthermore, a connection on $\Fr(E)$ induces covariant derivatives on all associated bundles in a coherent way, leading to many important geometric constructions.
%\footnote{Note that connections on frame bundles are distinguished from connections in other principal bundles by the existence of the so-called \emph{soldering form}.}
This is the situation in Riemannian geometry where, for a Riemannian manifold $X$, the Levi-Civita connection on $\Fr(TX)$ induces a covariant derivative on the tensor fields, leading, for instance, to the Riemannian curvature of~$X$.
Another instance is Riemannian spin geometry where, for a spin manifold $X$, a ``spin connection'' on $\Fr(TX)$ induces a covariant derivative on the spinor bundle, leading to the Dirac  and Laplace operator on the the spinor bundle.

The noncommutative geometry of frame bundles, however, has not been studied conclusively, although the notion of a noncommutative principal bundle is certainly available, accompanied by a natural procedure of associating noncommutative vector bundles (see, \eg, \cite{BaCoHa15,Ell00,SchWa17} and ref.~therein). 
To the best of our knowledge, the first and only systematic treatment of a quantized notion of ``frame bundle'' seems to be due to S.~Majid~\cite{Maj99,Maj02,Maj05} (see also~\cite[Sec.~5.6]{BeMa20} for a cohesive presentation).
For the sake of expedience, we briefly recall his main idea.
Let $B$ be an algebra equipped with a differential structure $(\Omega^1,d)$ which plays the role of the differential 1-forms of an ordinary manifold.
A \emph{framing} of $B$ is then essentially a Hopf-Galois-algebraic noncommutative principal bundle over $B$ that recovers $\Omega^1$ in the sense that there exists a suitable associated noncommutative vector bundle isomorphic to $\Omega^1$.
Note that the structure quantum group need not be fixed in this approach as one might have several different candidates.

In this paper we sort of complement Majid's approach within the C\Star-algebraic framework of noncommutative principal bundles.
More precisely, for a unital C\Star-algebra $\aB$ and a certain type of correspondence over $\aB$ which plays the role of the vector bundle associated with an ordinary frame bundle with respect to the standard representation $\pi$ of $\SO(n)$, let's say $M$, we provide a construction procedure for a free C\Star-dynamical system $(\aA_M,\SO(n),\alpha_M)$ with fixed point algebra $\aB$ and the property that its associated noncommutative vector bundle with respect to the standard representation of $\SO(n)$ is isomorphic to~$M$.
We stress that such a construction procedure already exists for the special case of the compact Abelian group $\SO(2)$ (\cf~\cite[Sec.~4]{SchWa15}), and therefore we assume from now on that $n \geq 3$.

Although our construction procedure is of topological nature, we hope that parts of it extend in such a way that additional geometrical information is comprised.
In particular, we hope to be able to construct new and interesting characteristic classes in future work.
On a related note, we wish to mention that this article is part of a larger program with the purpose to give a novel perspective on noncommutative Riemannian spin geometry by systematically developing and studying the key constructions and ideas of Riemannian spin geometry within the C\Star-algebraic framework of noncommutative principal bundles.
Finally, we would like to point out that with little effort our arguments and results extend to semisimple Lie groups admitting a faithful irreducible representation.

\subsection*{Organization of the article}

Here is an outline of this paper.
%Regarding the notation, we refer the reader to Section~\ref{sec:prelim}.
After providing the necessary preliminaries and notation, we present in Section~\ref{sec:NCFB}, the main body of this article, our approach to noncommutative frame bundles.
More precisely, for a unital C\Star-algebra $\aB$ and the standard representation $\pi$ of $\SO(n)$, $n \geq 3$, we introduce the 
central notion of this work which is concerned with correspondences over $\aB$ being ``\emph{tensorial of type $\pi$}'' (Definition~\ref{def:pi-corr} and Definition~\ref{def:pi-corr_tensor}).
For such a correspondence, let's say $M$, we are able to provide a construction procedure for a unitary tensor functor from a small tensor subcategory $\mathcal{T} \subseteq \rep(\SO(n))$ containing representatives of $\Irrep(\SO(n))$ to the tensor category $\corr(\aB)$ of correspondences over $\aB$.
The construction procedure relies on the representation theory of the semisimple compact Lie group $\SO(n)$, and it naturally splits into the following two main steps:
\begin{enumerate}
	\item
		We construct a small tensor subcategory $\mathcal{T} \subseteq \rep(\SO(n))$ containing representatives of $\Irrep(\SO(n))$ together with a linear functor $\Gamma_M: \mathcal{T} \to \corr(\aB)$ satisfying $\Gamma_M\big(\pi^{\otimes k}\big) = M^{\tensor k}$ for all integers $k \geq 0$ and $\Gamma_M(T)^* = \Gamma_M(T^*)$ for all morphisms $T$ in $\mathcal{T}$ (Corollary~\ref{cor:linfun}).
	\item
		We construct natural, $\aB$-bilinear, and unitary maps $m_M(\sigma,\tau) : \Gamma_M(\sigma) \tensor_\aB \Gamma_M(\tau) \to \Gamma_M(\sigma \tensor \tau)$ for all $\sigma,\tau \in\mathcal{T}$ (Corollary~\ref{cor:unitary_S}), and show that they satisfy a certain associativity condition (Lemma~\ref{lem:asso}).
\end{enumerate}
Taken together, the results amount to a unitary tensor functor $\mathcal{T} \to \corr(\aB)$ (Theorem~\ref{thm:unitenfun}) which in turn yields a free C\Star-dynamical system $(\aA_M,\SO(n),\alpha_M)$ with fixed point algebra $\aB$ and  $\Gamma_{\aA_M}(\pi) \cong M$, the \emph{noncommutative frame bundle associated with $M$} (Corollary~\ref{cor:main1}).
%by~\cite[Thm.~2.3]{Ne13}.
For expediency's sake, we discuss the main steps of the construction of $(\aA_M,\SO(n),\alpha_M)$ to Appendix~\ref{sec:construction}.
Finally, we present a classification result that extends the classical correspondence between frame bundles and their associated vector bundles (Corollary~\ref{cor:main2}).
Section~\ref{sex:examples} is devoted to examples.
Indeed, let $V$ be the representation space of $\pi$.
In Section~\ref{ex:trivial}, we show that $\aB \tensor V$ is tensorial of type~$\pi$.
In Section~\ref{ex:VB}, we demonstrate that each locally trivial hermitian vector bundle over a compact space with typical fibre $V$ and structure group $\SO(n)$ is tensorial of type~$\pi$, thus recovering the classical setting of frame bundles.
In Section~\ref{ex:quantumprojective} and Section~\ref{ex:cuntz}, we introduce new, as of yet unknown, free C\Star-dynamical systems with structure group $\SO(3)$, the \emph{quantum projective 7-space} and the (even part of the) Cuntz algebra $\mathcal{O}_2$, and hence we get two more examples of tensorial correspondences of type $\pi$ by looking at their associated noncommutative vector bundles with respect to $\pi$.
Last but not least, in Appendix~\ref{sec:MC}, we briefly treat the special case of $\SO(2)$ for the sake of completeness.

\section{Preliminaries and notation}\label{sec:prelim}

Our study deals with the noncommutative geometry of frame bundles.
This preliminary section exhibits the most fundamental definitions and notations in use.

At first we provide some standard references.
For a thorough treatment of Lie theory and representation theory we refer to the remarkable work~\cite{GoWa09} by Goodman and Wallach (see also~\cite{BroDi85}).
For a comprehensive introduction to the theory of fibre bundles, especially principal bundles and (their associated) vector bundles, we refer to Husem\"oller's book~\cite{Huse94} and the influential exposition~\cite{Kob69I} by Nomizu and Kobayashi. 
For a recent account of the theory of 
Hilbert module structures we refer to the excellent volume~\cite{Rae98} by Raeburn and Williams and the memoirs~\cite{EKQR06} by Echterhoff \etal
Our standard references for the theory of operator algebras are the opuses~\cite{BB06,Ped18} by Blackadar and Pedersen, respectively. 
We also use a categorical description of noncommutative principal bundles, and for the necessary background on category theory we refer to the monographs~\cite{EtGENiOs15,Mi65,NeTu10}.

\paragraph*{About groups}
Let $G$ be a compact group.
All representations of $G$ are assumed to be finite-dimensional and unitary unless mentioned otherwise.
We denote a representation $\sigma: G \to \mathcal{U}(V_\sigma)$ by the pair $(\sigma,V_\sigma)$ or simply by $\sigma$.
%when no ambiguity is possible.
In particular, we let $\one$ stand for the trivial representation when no ambiguity is possible.
%With the notation $(\bar \sigma,\bar V_\sigma)$ we mean the conjugated representation corresponding to a representation $(\sigma,V_\sigma)$.
We write $\rep(G)$ for its C\Star-tensor category of representations and $\Irrep(G)$ for the set of equivalence classes of irreducible representations.
By abuse of notation, we also use the symbol $\sigma$ to denote elements of $\Irrep(G)$ and choose a representative representation $(\sigma,V_\sigma)$ for $\sigma \in \Irrep(G)$ when needed.
%We let $\one$ stand for the equivalence class of the trivial representation of $G$.
%We denote the equivalence class of the trivial representation of $G$ briefly by~\one.

One of the key ingredients for our construction procedure in Section~\ref{sec:NCFB} is the following result on irreducible representations of the semisimple compact Lie group $\SO(n)$, $n \geq 3$.
%Only some of the definitions and statement of theorems are set down here. 

\begin{corollary}[{See, \eg,~\cite[Thm.~5.5.21]{GoWa09}}]\label{cor:tensorprodSO(n)}
	Let $\pi$ be the standard representation of $\SO(n)$,~$n \geq 3$.
	Each irreducible representation of $\SO(n)$ occurs as a subrepresentation of some tensor product representation $\pi^{\otimes k}$, $k \geq 0$ (with $\pi^{\otimes 0} = \one$).
\end{corollary}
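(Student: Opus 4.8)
The plan is to deduce this from the structure theory of $\SO(n)$ via highest weight theory, following the standard argument that appears in Goodman--Wallach. First I would recall that every irreducible representation $\sigma$ of the connected compact group $\SO(n)$ is determined by its highest weight $\lambda_\sigma$ relative to a fixed maximal torus and choice of positive roots, and that the highest weights are precisely the dominant analytically integral weights. The key combinatorial input is that the fundamental weights $\varpi_1,\dots,\varpi_r$ (where $r=\lfloor n/2\rfloor$ is the rank) together with, in the even case $n=2m$, the half-spin considerations, are all obtainable from the weights of $\pi$: concretely, the standard representation $\pi$ has highest weight $\varpi_1 = e_1$, and the exterior powers $\Lambda^k \pi$ are irreducible (or nearly so, with the middle one splitting when $n$ is even) with highest weights $e_1 + \cdots + e_k$, which furnish the remaining fundamental weights $\varpi_2,\dots,\varpi_{r-1}$ and $2\varpi_r$.

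The main step is then the following: since every $\Lambda^k\pi$ occurs inside $\pi^{\otimes k}$ as a direct summand (the antisymmetrizer is a $G$-equivariant projection $\pi^{\otimes k}\to\Lambda^k\pi$), every fundamental weight $\varpi_i$ with $i<r$, as well as $2\varpi_r$, is the highest weight of a subrepresentation of some $\pi^{\otimes k}$. Given an arbitrary irreducible $\sigma$ with dominant weight $\lambda_\sigma = \sum_i a_i\varpi_i$, I would consider the tensor product $\bigotimes_i \sigma_i^{\otimes a_i}$ where each $\sigma_i$ is the fundamental representation with highest weight $\varpi_i$; by the Cartan product / highest weight argument, this tensor product contains the irreducible of highest weight $\sum_i a_i\varpi_i = \lambda_\sigma$ as its ``top'' component (the one whose highest weight is the sum of the highest weights). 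Since each $\sigma_i$ embeds in a suitable $\pi^{\otimes k_i}$, the whole tensor product embeds in $\pi^{\otimes(\sum_i a_i k_i)}$, and hence $\sigma$ occurs as a subrepresentation there. The case $\pi^{\otimes 0}=\one$ handles the trivial representation.

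The delicate point — and the main obstacle — is the spin-type fundamental weight(s): for $n$ odd the last fundamental weight $\varpi_r$ corresponds to the spin representation, which does \emph{not} occur in any tensor power of $\pi$ (these are representations of $\mathrm{Spin}(n)$, not of $\SO(n)$), and similarly for $n$ even. The resolution, and the reason the statement is true for $\SO(n)$ rather than $\mathrm{Spin}(n)$, is that the analytically integral weights for $\SO(n)$ (as opposed to $\mathrm{Spin}(n)$) are exactly those lying in the root lattice enlarged appropriately — equivalently, the highest weights of genuine $\SO(n)$-representations are precisely those $\lambda=\sum \lambda_i e_i$ with all $\lambda_i$ integers (no half-integers) — so the spin fundamental weight never appears, and instead $2\varpi_r$ (coming from a genuine exterior-power or symmetric construction on $\pi$) suffices to generate the relevant sublattice. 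I would therefore be careful to phrase the weight-lattice bookkeeping in terms of the integer weights for $\SO(n)$ and cite the relevant classification (as in~\cite[Ch.~5]{GoWa09}) rather than reprove it; modulo that citation, the argument above is routine.
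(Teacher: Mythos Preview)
The paper does not actually prove this corollary; it is stated with a bare citation to Goodman--Wallach, so there is no ``paper's own proof'' to compare against. Your highest-weight argument via exterior powers is the standard route and is essentially what one finds in that reference.

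One point to sharpen: in the even case $n=2m$ there are \emph{two} spinorial fundamental weights $\varpi_{m-1}$ and $\varpi_m$, and your phrase ``$2\varpi_r$ suffices to generate the relevant sublattice'' handles only the odd case cleanly. For type $D_m$ you need both irreducible summands of $\Lambda^m\pi$ (highest weights $e_1+\cdots+e_{m-1}\pm e_m$, i.e.\ $2\varpi_m$ and $2\varpi_{m-1}$) together with $\Lambda^{m-1}\pi$ (highest weight $\varpi_{m-1}+\varpi_m=e_1+\cdots+e_{m-1}$); once those are in hand, any dominant integral weight for $\SO(2m)$ is a non-negative integer combination of the highest weights of the $\Lambda^k\pi$ and the two halves of $\Lambda^m\pi$, and your Cartan-product argument goes through unchanged. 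You already flagged this as the delicate step, so this is just a reminder to actually carry out the $D_m$ bookkeeping rather than wave at it.

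If you want a shorter proof that avoids the case split entirely: for $n\ge 3$ the representation $\pi$ is faithful (by definition of $\SO(n)$ as a matrix group) and self-conjugate (it is the complexification of a real representation, so $\bar\pi\cong\pi$). For any compact group, a faithful representation $\rho$ has the property that every irreducible occurs in some $\rho^{\otimes k}\otimes\bar\rho^{\,\otimes l}$ --- this is the classical Stone--Weierstrass/Peter--Weyl argument on matrix coefficients --- and self-conjugacy collapses $\rho^{\otimes k}\otimes\bar\rho^{\,\otimes l}$ to $\rho^{\otimes(k+l)}$. This gives the result in one line without any weight-lattice analysis.
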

%Further facts from the representation theory of $\SO(n)$, $n \geq 3$, will be stated later as the need arises.
%https://math.stackexchange.com/questions/3834980/irreducible-representations-of-son

\paragraph*{About frames bundles}
Let $X$ be a locally compact space and let $q:E \to X$ be a locally trivial (real or complex) vector bundle over $X$ with typical fibre $V$. %$(V,\lprod{}{\cdot,\cdot})$.
The \emph{frame bundle}
\begin{align*}
	\Fr(E):=\bigcup_{x \in X} \Iso\left(V,E_x\right),
	&&
	E_x := q^{-1}(\{x\}),
\end{align*}
carries the structure of a principal $\mathrm{GL}(V)$-bundle over $X$ with respect to the canonical right action of $\mathrm{GL}(V)$ on $\Fr(E)$.
The associated vector bundle $\Fr(E) \times_{\mathrm{GL}(V)} V$ with respect to the standard representation $(\pi,V)$ of $\mathrm{GL}(V)$ recovers $E$, \ie, $\Fr(E) \times_{\mathrm{GL}(V)} V \cong E$ as vector bundles over $X$.
For simplicity of notation, we use the same symbol $\Fr(E)$ to denote any reduction of the frame bundle.
%If $X$ is differentiable, then we write~$\Fr(X)$ for the frame bundle associated with the tangent bundle $q:TX \to X$.
%If, in addition, $X$ is orientable, then we consider the reduction of $\Fr(X)$ to $\SO(n)$ given by orientation-preserving isometries $\mathbb{R}^n \to E_x$, $x \in X$, for which we use the same symbol, $\Fr(X)$, for simplicity of notation.

\paragraph*{About Hilbert modules}
Let $\aB$ be a unital C\Star-algebra.  
A~\emph{correspondence} over $\aB$ is a $\aB$-bimodule $M$ equipped with an inner product $\rprod{\aB}{\cdot, \cdot}: M \times M \to \aB$ turning it into a right Hilbert $\aB$-module such that the left action of $\aB$ on $M$ is through adjointable operators. 
Given two correspondences $M$ and $N$ over $\aB$, we write $M \tensor_\aB N$ for their tensor product on which the inner product is determined by $\scal{x_1 \tensor y_1, \; x_2 \tensor y_2}_\aB = \scal{y_1, \scal{x_1,x_2}_\aB \,. y_2}_\aB$ for all $x_1, x_2 \in M$ and $y_1, y_2 \in N$.
We are also concerned with multiple tensor products.
For a correspondence $M$ over $\aB$ and a non-negative integer $k$ we let $M^{\otimes k}$ stand for the $k$-fold tensor product of $M$ with itself (with $M^{\otimes 0} = \aB$).
We use the symbol $\corr(\aB)$ to denote the C\Star-tensor category of correspondences over~$\aB$.

\paragraph*{\boldmath About C$^*$-dynamical systems}\label{sec:Cstar}
Let $\aA$ be a unital C\Star-algebra and let $G$ be a compact group that acts strongly continuously on $\aA$ by \Star-automorphisms $\alpha_g:\aA \to \aA$, $g \in G$.
Throughout this article we call such data a \emph{C\Star-dynamical system}, denote it briefly by $(\aA,G,\alpha)$, and typically write $\aB := \aA^G$ for its fixed point algebra. 

\enlargethispage{\baselineskip}
\begin{remark}
	Like every, possibly infinite, continuous representation of a compact group, the algebra $\aA$ can be decomposed into its isotypic components which amounts to saying that their algebraic direct sum forms a dense \Star-subalgebra of~$\aA$ (see, \eg, \cite[Thm.~4.22]{HoMo06}). 
\end{remark}

We also deal to a large extend with the associated spaces
\begin{align}\label{eq:NCVB}
	\Gamma_\aA(\sigma) := \{ x \in \aA \tensor V_\sigma : (\forall g\in G) \, (\alpha_g \tensor \sigma_g)(x) = x\}
\end{align}
for all objects $\sigma$ in $\rep(G)$, each of which is naturally a correspondence over $\aB$ with respect to the canonical left and right actions and the restriction of the right $\aA$-valued inner product on $\aA \tensor V_\sigma$ determined by $\rprod{\aA}{a \tensor v, b \tensor w} := \scal{v,w} a^*b$ for all $a,b \in \aA$ and $v,w \in V_\sigma$. 
Most notably, the linear functor $\Gamma_\aA: \rep(G) \to \corr(\aB)$, defined for objects by $\Gamma_\aA(\sigma)$ and for morphisms by $\Gamma_\aA(T) := \one_\aA \tensor T$, together with the natural $\aB$-bilinear isometries $m_\aA(\sigma,\tau) : \Gamma_\aA(\sigma) \tensor_\aB \Gamma_\aA(\tau) \to \Gamma_\aA(\sigma \tensor \tau)$, $x \tensor y \mapsto x_{12} y_{13}$ for all objects $\sigma,\tau$ in $\rep(G)$ constitute a weak unitary tensor functor $\rep(G) \to \corr(\aB)$ which allows to reconstruct the C\Star-dynamical system $(\aA,G,\alpha)$ up to isomorphism (see~\cite[Sec.~2]{Ne13}). 
%
%\begin{remark}
%	Note that the $\aB$-bilinear maps $m_\aA(\sigma,\tau)$, $\sigma,\tau \in \rep(G)$, are not claimed to be adjointable, and hence they are formally not morphisms in $\corr(\aB)$.
%\end{remark}

\paragraph*{About freeness}\label{sec:free}
A C\Star-dynamical system $(\aA,G,\alpha)$ is called \emph{free} if the \emph{Ellwood map} 
\begin{align*}
	\Phi: \aA \tensor_{\text{alg}} \aA \rightarrow C(G,\aA),
	&&
	\Phi(x\tensor y)(g):=x \alpha_g(y)
\end{align*}
has dense range with respect to the canonical C\Star-norm on $\Cont(G,\aA)$.
This requirement was originally introduced for actions of quantum groups on C\Star-algebras by Ellwood~\cite{Ell00} and is known to be equivalent to Rieffel's saturatedness~\cite{Rieffel91} and the Peter-Weyl-Galois condition~\cite{BaCoHa15}. 
By \cite[Prop.~7.1.12 \& Thm.~7.2.6]{Phi87}, we can assert that a continuous action $r:P\times G\rightarrow P$ of a compact group $G$ on a compact space $P$ is free in the classical sense if and only if the induced C\Star-dynamical system $(C(P),G,\alpha)$ with $\alpha_g(f) := f \circ r_g$ is free in the sense of Ellwood. 
Free C\Star-dynamical systems thus provide a natural framework for noncommutative principal bundles, and in this context the spaces in~\eqref{eq:NCVB} play the role of the associated vector bundles.
In particular, they are finitely generated and projective as right $\aB$-modules (see,~\eg,~\cite[Thm.~1.2]{CoYa13a}).

Yet another crucial result, which plays an important role in the construction of noncommutative frame bundles, is that free C\Star-dynamical systems are in bijective correspondence with so-called unitary tensor functors (\cf~\cite[Sec.~5]{SchWa17}), whose definition we now recall for the convenience of the reader.

\begin{defn}\label{defn:wutfun}
	Let $\aB$ be a unital C\Star-algebra and let $G$ be a compact group.
	A \emph{unitary tensor functor} $\rep(G) \to \corr(\aB)$ is a linear functor $\Gamma: \rep(G) \to \corr(\aB)$ together with natural $\aB$-bilinear unitary maps $m(\sigma,\tau) : \Gamma(\sigma) \tensor_\aB \Gamma(\tau) \to \Gamma(\sigma \tensor \tau)$ for all $\sigma,\tau \in \rep(G)$ such that the following conditions are satisfied:
	\begin{enumerate}[nosep]
	\item[(i)]
		$\Gamma(\one) = \aB$, and for each object $\sigma$ in $\rep(G)$ the map $m(\one,\sigma)$ maps $b \tensor x$ to $b \acts x$ and, similarly, $m(\sigma,\one)$ maps $x \tensor b$ to $x \acts b$ for all $b \in \aB$ and $x \in \Gamma(\sigma)$.
	\item[(ii)]
		$\Gamma(T)^* = \Gamma(T^*)$ for all morphisms $T$ in $\rep(G)$.
	\item[(iii)]
		$m(\sigma, \tau \tensor \rho) \left(\id \tensor_\aB \, m(\tau, \rho)\right) = m(\sigma \tensor \tau, \rho) \left(m(\sigma, \tau) \tensor_\aB \id\right)$ for all objects $\sigma, \tau, \rho$ in $\rep(G)$.
%
%		The maps $m$ are associative in the sense that for all $\sigma, \tau, \rho \in \rep(G)$ we have 
%		\begin{equation}\label{eq:m_asso}
%			m(\sigma, \tau \tensor \rho) \circ \left(\id \tensor  m(\tau, \rho)\right) 
%			= m(\sigma \tensor \tau, \rho) \circ \left(m(\sigma, \tau) \tensor \id\right).
%		\end{equation}
%	\item[(iv)]
%		For all $\sigma,\tau \in \rep(G)$ and $x \in \Gamma(\sigma)$ the right $\aB$-linear map $\ell_x: \Gamma(\tau) \to \Gamma(\sigma \tensor \tau)$ given by $\ell_x(y) := m(\sigma,\tau)(x \tensor y)$ is adjointable and for all $\sigma, \tau, \rho \in \rep(G)$ and $x \in \Gamma(\sigma)$ we have $m(\tau,\rho) \circ \left(\ell_x^* \tensor \id\right) = \ell_x^* \circ m(\sigma \tensor \tau, \rho)$.
	\end{enumerate}
\end{defn}

\enlargethispage{\baselineskip}
\begin{remark}\label{rem:subcat}
	For the construction of a free C\Star-dynamical system from a unitary tensor functor $\rep(G) \to \corr(\aB)$ one only needs a small C\Star-tensor subcategory $\mathcal{T} \subseteq \rep(G)$ containing representatives of $\Irrep(G)$ (\cf~\cite[Thm.~2.3]{Ne13}).
\end{remark}

\pagebreak[3]
\section{Noncommutative frame bundles}\label{sec:NCFB}

In this section we present our approach to noncommutative frame bundles.
More precisely, for a unital C\Star-algebra $\aB$ and a certain type of correspondence over $\aB$ which plays the role of the vector bundle associated with an ordinary frame bundle with respect to the standard representation $\pi$ of $\SO(n)$, $n \geq 3$, let's say $M$, we provide a construction procedure for a free C\Star-dynamical system $(\aA_M,\SO(n),\alpha_M)$ with fixed point algebra $\aB$ and $\Gamma_{\aA_M}(\pi) \cong M$, the \emph{noncommutative frame bundle associated with $M$} (see Appendix~\ref{sec:MC} for the case $n=2$).
The main idea is to put together a unitary tensor functor from a small tensor subcategory $\mathcal{T} \subseteq \rep(\SO(n))$ containing representatives of $\Irrep(\SO(n))$ to $\corr(\aB)$ (\cf~Remark~\ref{rem:subcat}). 
%that is solely build upon~$M$ and the representation theory of $\SO(n)$

We begin by introducing the main notion of this article:

\begin{defn}\label{def:pi-corr}
Let $\aB$ be a unital C\Star-algebra and let $\pi$ be the standard representation of $\SO(n)$, $n \geq 3$.
We say that \emph{a correspondence $M$ over $\aB$ is of type~$\pi$} if there exist injective linear maps $\varphi_{k,l} : C_{k,l}:= \Hom_{\SO(n)}\big(V_\pi^{\tensor k},V_\pi^{\tensor l}\big) \to \End\big(M^{\tensor k},M^{\tensor l}\big)$ for all integers $k,l \geq 0$ such that the following compatibility conditions are satisfied:
	\begin{enumerate}[nosep]
	\item[(C)]
		$\varphi_{l,m}(T') \varphi_{k,l}(T) = \varphi_{k,m}(T'T)$ for all $k,l,m \geq 0$, $T\in C_{k,l}$,  and $T' \in C_{l,m}$.
	\item[(A)]
		$\varphi_{k,l}(T)^* = \varphi_{l,k}(T^*)$ for all $k,l \geq 0$ and $T\in C_{k,l}$.
	\item[(U)]
		$\varphi_{k,k}(\id) = \id$ for all $k \geq 0$.
	\end{enumerate}
\end{defn}
From now on, let $\aB$ be a unital C\Star-algebra, let $\pi$ be the standard representation of $\SO(n)$, $n \geq 3$, and let $M$ be a correspondence over $\aB$ of type~$\pi$.
Also, let $V := V_\pi$ for brevity.
As a first step we construct a small tensor subcategory $\mathcal{T} \subseteq \rep(G)$ containing representatives of $\Irrep(\SO(n))$ together with a linear functor $\Gamma_M: \mathcal{T} \to \corr(\aB)$.
For this purpose, we choose for each $\sigma \in \Irrep(\SO(n))$ a representative 
$(\sigma,V_\sigma)$ that is a subrepresentation of some tensor product representation $\big(\pi^{\otimes k},V^{\otimes k}\big)$, $k \geq 0$ (\cf~Corollary~\ref{cor:tensorprodSO(n)}).
In particular, for $\one \in \Irrep(\SO(n))$ we choose the trivial representation $(\one,\C)$.
Furthermore, we consider the full subcategory $\mathcal{S} \subseteq \rep(G)$ whose objects consists of all finite tensor product representations generated by the family $(\sigma,V_\sigma)$, $\sigma \in \Irrep(\SO(n))$.

Let $(\sigma,V_\sigma)$ be an object in $\mathcal{S}$.
By construction, there exists an integer $k \geq 0$ such that $(\sigma,V_\sigma)$ is a subrepresentation of $\big(\pi^{\otimes k},V^{\otimes k}\big)$.
Let $P_\sigma$ be the orthogonal projection of $V^{\otimes k}$ onto~$V_\sigma$.
Clearly, $P_\sigma \in C_{k,k}$, and hence $\varphi_{k,k}(P_\sigma)$ acts as an adjointable operator on $M^{\tensor k}$.
Moreover, Conditions~\hyperref[def:pi-corr]{(C) and (A)} combined imply that $\varphi_{k,k}(P_\sigma)$ is a projection, and from this it may be concluded that
\begin{align}\label{eq:Gamma_M}
	\Gamma_M(\sigma) := \varphi_{k,k}(P_\sigma)\left( M^{\tensor k} \right)
\end{align}
is a correspondence over $\aB$.
We thus have a correspondence over $\aB$ available for each object $(\sigma,V_\sigma)$ in $\mathcal{S}$.
Note that $\Gamma_M\big(\pi^{\otimes k}\big) = M^{\tensor k}$ for all integers $k \geq 0$ by Condition~\hyperref[def:pi-corr]{(U)}.

Next, let $(\sigma,V_\sigma)$ and $(\tau,V_\tau)$ be objects in $\mathcal{S}$ and let $T:V_\sigma \to V_\tau$ be a morphism.
Our objective is to relate the correspondences $\Gamma_M(\sigma)$ and $\Gamma_M(\tau)$ by means of a morphism $\Gamma_M(T)$.
To this end, let $k,l \geq 0$ be integers such that $(\sigma,V_\sigma)$ and $(\tau,V_\tau)$ are subrepresentations of $\big(\pi^{\otimes k},V^{\otimes k}\big)$ and  $\big(\pi^{\otimes l},V^{\otimes l}\big)$, respectively, and let $P_\sigma$ and $P_\tau$ be the orthogonal projections of $V^{\otimes k}$ onto~$V_\sigma$ and $V^{\otimes l}$ onto~$V_\tau$, respectively.
We define a map $W_T : V^{\otimes k} \to V^{\otimes l}$~by
\begin{align*}
	W_T(x) 
	:= 		
	\begin{cases}
		T(x) & \text{for $x \in V_\sigma$},
		\\
		0 & \text{for $x \in V_\sigma^\perp \subseteq V^{\otimes k}$}.
	 \end{cases}
\end{align*}
Note that $W_T^* = W_{T^*}$.
Moreover, it is easily seen that $W_T \in C_{k,l}$, and so we may look at its image under the map $\varphi_{k,l}$.
In fact, we have
\begin{align*}
	\varphi_{l,l}(P_\tau) \varphi_{k,l}(W_T) 
	\overset{\hyperref[def:pi-corr]{(\text{C})}}{=}
	\varphi_{k,l}(P_\tau W_T) = \varphi_{k,l}(W_T)
\end{align*}
which implies that $\ran(\varphi_{k,l}(W_T)) \subseteq \Gamma_M(\tau)$.
With this at hand, we put
\begin{align*}
	\Gamma_M(T) := \varphi_{k,l}(W_T) \restriction_{\Gamma_M(\sigma)}^{\Gamma_M(\tau)} : \Gamma_M(\sigma) \to \Gamma_M(\tau).
\end{align*}
It is worth noting that $W_{P_\sigma} = P_\sigma$, therefore that $\Gamma_M(P_\sigma) = \varphi_{k,k}(P_\sigma)$, and finally that $\ran(\Gamma_M(P_\sigma)) = \Gamma_M(\sigma)$.
Furthermore, we see at once that $\Gamma_M(\id_{V_\sigma}) = \id_{\Gamma_M(\sigma)}$.

By attentively following the above construction and applying the assumptions in Definition~\ref{def:pi-corr}, we immediately~get:

\begin{lemma}\label{lem:lin}
	For objects $(\sigma,V_\sigma)$, $(\tau,V_\tau)$, and $ (\rho,V_\rho)$ in $\mathcal{S}$ the following assertions hold:
	\begin{enumerate}
	\item
		$\Gamma_M(T+cT') = \Gamma_M(T) + c\Gamma_M(T')$ for all morphisms $T,T':V_\sigma \to V_\tau$ and $c \in \C$.
	\item
		$\Gamma_M(T'T) = \Gamma_M(T') \Gamma_M(T)$ for all morphisms $T:V_\sigma \to V_\tau$ and $T':V_\tau \to V_\rho$.
	\end{enumerate}
\end{lemma}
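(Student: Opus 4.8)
The plan is to unwind the definitions of $\Gamma_M$ on morphisms and reduce both assertions to the compatibility conditions (C) and (A) for the maps $\varphi_{k,l}$, together with linearity of $\varphi_{k,l}$ (which is part of Definition~\ref{def:pi-corr}). The only subtlety is bookkeeping of the ambient tensor powers: when $T,T'$ (or $T$ and $T'$) are realized inside different $\pi^{\otimes k}$, one must first pass to a common exponent, and check that the construction of $\Gamma_M(T)$ does not depend on the chosen embedding exponent.

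First I would prove the independence-of-exponent claim, since both parts rest on it. Suppose $(\sigma,V_\sigma)$ is a subrepresentation of both $\pi^{\otimes k}$ and $\pi^{\otimes k'}$ with, say, $k \le k'$; more practically, any subrepresentation of $\pi^{\otimes k}$ is also a subrepresentation of $\pi^{\otimes k'}$ for $k' \ge k$ after tensoring with $\one \subseteq \pi^{\otimes(k'-k)}$, but in $\SO(n)$ the copy of $\one$ inside $\pi^{\otimes 2}$ given by the invariant form lets one realize $\pi^{\otimes k}$ inside $\pi^{\otimes(k+2)}$. The point is that $W_T$ and $\varphi$ are compatible with such stabilizations: if $\iota \in C_{k,k+2}$ is the isometric intertwiner $V^{\otimes k} \hookrightarrow V^{\otimes(k+2)}$, then $W_T$ computed at level $k'$ equals $\iota' W_T \iota^*$ for appropriate isometries, and applying $\varphi$ together with (C) and (A) shows $\varphi_{k',l'}(W_T^{(k')})$ restricts to the same operator $\Gamma_M(\sigma)\to\Gamma_M(\tau)$ as $\varphi_{k,l}(W_T^{(k)})$, because $\varphi_{k,k+2}(\iota)$ is an isometry onto $\Gamma_M$ of the larger space whose restriction to $\Gamma_M(\sigma)$ is the identity. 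With well-definedness in hand, for part (1) I would choose a single pair $k,l$ working for both $T$ and $T'$; then $W_{T+cT'} = W_T + cW_{T'}$ directly from the case distinction in the definition of $W$, linearity of $\varphi_{k,l}$ gives $\varphi_{k,l}(W_{T+cT'}) = \varphi_{k,l}(W_T) + c\,\varphi_{k,l}(W_{T'})$, and restricting to $\Gamma_M(\sigma)$ (landing in $\Gamma_M(\tau)$) yields the claim.

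For part (2), with $(\sigma,V_\sigma)\subseteq\pi^{\otimes k}$, $(\tau,V_\tau)\subseteq\pi^{\otimes l}$, $(\rho,V_\rho)\subseteq\pi^{\otimes m}$ and $T:V_\sigma\to V_\tau$, $T':V_\tau\to V_\rho$, the key identity is $W_{T'T} = W_{T'} P_\tau W_T$ as maps $V^{\otimes k}\to V^{\otimes m}$: indeed for $x\in V_\sigma$, $W_T x = Tx \in V_\tau$, so $P_\tau W_T x = W_T x$ and $W_{T'}(P_\tau W_T x) = T'Tx = W_{T'T}x$, while on $V_\sigma^\perp$ both sides vanish. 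Applying $\varphi$ and using (C) twice gives $\varphi_{k,m}(W_{T'T}) = \varphi_{l,m}(W_{T'})\,\varphi_{l,l}(P_\tau)\,\varphi_{k,l}(W_T) = \varphi_{l,m}(W_{T'})\,\varphi_{k,l}(W_T)$, where the second equality uses $\ran(\varphi_{k,l}(W_T))\subseteq\Gamma_M(\tau) = \ran(\varphi_{l,l}(P_\tau))$ together with the fact, already noted in the text, that $\varphi_{l,l}(P_\tau)$ is a projection onto $\Gamma_M(\tau)$ hence acts as the identity there. Restricting the outer composite to $\Gamma_M(\sigma)$ and noting the intermediate range sits inside $\Gamma_M(\tau)$ identifies $\varphi_{k,m}(W_{T'T})\restriction_{\Gamma_M(\sigma)}^{\Gamma_M(\rho)}$ with $\Gamma_M(T')\Gamma_M(T)$, which is exactly $\Gamma_M(T'T) = \Gamma_M(T')\,\Gamma_M(T)$.

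I expect the main obstacle to be the independence-of-exponent step: one must be careful that the various natural embeddings $\pi^{\otimes k}\hookrightarrow\pi^{\otimes k'}$ are themselves $\SO(n)$-intertwiners lying in the $C_{k,k'}$, and that under $\varphi$ they become isometries acting as the identity on the relevant $\Gamma_M(\sigma)$; everything else is a mechanical application of (C), (A), (U) and linearity. In the write-up I would either fix once and for all a compatible system of embeddings (a ``filtered'' family of representatives) so that the ambient exponents can always be increased coherently, or alternatively phrase $\Gamma_M$ as the inductive limit over this filtered system, in which case Lemma~\ref{lem:lin} becomes immediate from the formulas above.
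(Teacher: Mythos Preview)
Your core arguments for both parts are correct and coincide with what the paper has in mind (the paper states the lemma without proof, calling it immediate from the construction and Conditions (C), (A), (U)). However, the ``independence-of-exponent'' step you set up and flag as the main obstacle is unnecessary and reflects a misreading of the category $\mathcal{S}$.

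By construction, each object of $\mathcal{S}$ is a \emph{specific} finite tensor product $\sigma_{i_1}\otimes\cdots\otimes\sigma_{i_r}$ of the fixed irreducible representatives, with $V_{\sigma_{i_j}}\subseteq V^{\otimes k_{i_j}}$ already chosen. Hence $V_\sigma = V_{\sigma_{i_1}}\otimes\cdots\otimes V_{\sigma_{i_r}}$ sits canonically in $V^{\otimes k}$ with $k = k_{i_1}+\cdots+k_{i_r}$, and this $k$ is part of the data of the object; there is no second exponent $k'$ to reconcile. (Two different formal tensor products cannot give literally the same subspace if their exponents differ, since $V^{\otimes k}$ and $V^{\otimes k'}$ are different ambient spaces.) In particular, in part~(1) the morphisms $T,T':V_\sigma\to V_\tau$ share the \emph{same} $k,l$ from the outset, so your argument $W_{T+cT'}=W_T+cW_{T'}$ and linearity of $\varphi_{k,l}$ finishes immediately. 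Your stabilization via $\one\subseteq\pi^{\otimes 2}$ is not needed (and would in any case require the tensor condition (T) rather than just (C), (A), (U)).

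One minor simplification in part~(2): since $\operatorname{ran}(W_T)\subseteq V_\tau$ already, you have $W_{T'T}=W_{T'}W_T$ outright; inserting $P_\tau$ is harmless but superfluous. Then a single application of (C) gives $\varphi_{k,m}(W_{T'T})=\varphi_{l,m}(W_{T'})\varphi_{k,l}(W_T)$, and restriction yields the claim.
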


\begin{lemma}\label{lem:*-inv}
	 Let $(\sigma,V_\sigma)$ and $(\tau,V_\tau)$ be objects in $\mathcal{S}$ and let $T:V_\sigma \to V_\tau$ be a morphism.
	 Then $\Gamma_M(T)^* = \Gamma_M(T^*)$.
\end{lemma}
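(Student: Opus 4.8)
The plan is to unwind the definition of $\Gamma_M(T)$ and reduce the statement to Condition~\hyperref[def:pi-corr]{(A)}, the identity $W_T^* = W_{T^*}$ already noted above, and the ``support'' relations satisfied by $W_T$ with respect to the projections $P_\sigma$ and $P_\tau$. So I would fix integers $k,l \geq 0$ and the orthogonal projections $P_\sigma \in C_{k,k}$, $P_\tau \in C_{l,l}$ entering the definition of $\Gamma_M(T)$, and abbreviate $p := \varphi_{k,k}(P_\sigma) \in \End(M^{\tensor k})$, $q := \varphi_{l,l}(P_\tau) \in \End(M^{\tensor l})$, and $w := \varphi_{k,l}(W_T) \in \End\big(M^{\tensor k},M^{\tensor l}\big)$. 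As observed in the construction, $p$ and $q$ are self-adjoint projections with ranges $\Gamma_M(\sigma)$ and $\Gamma_M(\tau)$, respectively. Since $W_T$ vanishes on $V_\sigma^\perp$ and has range inside $V_\tau$, one has $W_T P_\sigma = W_T = P_\tau W_T$ in $C_{k,l}$, so applying $\varphi$ and Condition~\hyperref[def:pi-corr]{(C)} yields $w p = w = q w$, i.e.\ $w = q w p$. Taking adjoints and invoking Condition~\hyperref[def:pi-corr]{(A)} together with $W_T^* = W_{T^*}$ then gives $w^* = \varphi_{l,k}(W_{T^*})$ and $w^* = p\, w^* q$; in particular $w^*$ maps $\Gamma_M(\tau) = \ran(q)$ into $\Gamma_M(\sigma) = \ran(p)$.

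It remains to check that the adjoint of the compression of $w$ to these submodules is the compression of $w^*$. For $x \in \Gamma_M(\sigma)$ and $y \in \Gamma_M(\tau)$, using that the inner products on $\Gamma_M(\sigma)$ and $\Gamma_M(\tau)$ are restrictions of those on $M^{\tensor k}$ and $M^{\tensor l}$ and that $w$ is adjointable, one computes
\begin{equation*}
	\rprod{\aB}{\Gamma_M(T) x, y} = \rprod{\aB}{w x, y} = \rprod{\aB}{x, w^* y},
\end{equation*}
and $w^* y \in \Gamma_M(\sigma)$ by the previous step; hence $\Gamma_M(T)^* = w^*\restriction_{\Gamma_M(\tau)}^{\Gamma_M(\sigma)}$. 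Since by definition $\Gamma_M(T^*) = \varphi_{l,k}(W_{T^*})\restriction_{\Gamma_M(\tau)}^{\Gamma_M(\sigma)} = w^*\restriction_{\Gamma_M(\tau)}^{\Gamma_M(\sigma)}$, the claim follows.

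I expect the only point that genuinely requires care to be the interplay between taking adjoints and taking restrictions: the validity of ``the adjoint of a restriction equals the restriction of the adjoint'' in this setting is precisely what forces one to record the stronger relation $w = q w p$ (equivalently $w^* = p\, w^* q$) rather than merely the range inclusion $\ran(w) \subseteq \Gamma_M(\tau)$ that was used to define $\Gamma_M(T)$. Everything else is a routine application of Definition~\ref{def:pi-corr}.
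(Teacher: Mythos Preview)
Your proposal is correct and follows essentially the same approach as the paper: both arguments reduce to the inner-product computation $\rprod{\aB}{\Gamma_M(T)x,y} = \rprod{\aB}{x,\varphi_{l,k}(W_{T^*})y}$ via Condition~\hyperref[def:pi-corr]{(A)} and the identity $W_T^* = W_{T^*}$. Your version is simply more explicit about the support relations $w = qwp$ ensuring that $w^*$ maps $\Gamma_M(\tau)$ into $\Gamma_M(\sigma)$, a point the paper leaves implicit (it is covered by the range-inclusion argument preceding the definition of $\Gamma_M(T)$, applied to $T^*$).
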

\begin{proof}
	Let $x \in \Gamma_M(\sigma)$ and let $y \in \Gamma_M(\tau)$.
	Since $\varphi_{k,l}(W_T)^* \overset{\hyperref[def:pi-corr]{(\text{A})}}{=} \varphi_{l,k}(W_T^*) = \varphi_{l,k}(W_{T^*})$, we conclude that
	\begin{gather*}
		\rprod{\aB}{\Gamma_M(T)(x),y} 
		=
		\rprod{\aB}{\varphi_{k,l}(W_T)(x),y} 
		=
		\rprod{\aB}{x,\varphi_{k,l}(W_T)^*(y)} 
		\\
		=
		\rprod{\aB}{x,\varphi_{l,k}(W_{T^*})(y)} 
		=
		\rprod{\aB}{x,\Gamma_M(T^*)(y)}.
		\qedhere
	\end{gather*}
\end{proof}

We proceed by looking at the full subcategory $\mathcal{T} \subseteq \rep(\SO(n))$ whose objects are finite direct sums of objects in $\mathcal{S}$.
It is clear that $\mathcal{T}$ is a small tensor subcategory of $\rep(\SO(n))$.
Furthermore, for an object $\sigma = \sigma_1 \oplus \dots \oplus \sigma_m$ in $\mathcal{T}$ we define 
\begin{align*}
	\Gamma_M(\sigma) := \Gamma_M(\sigma_1) \oplus \cdots \oplus \Gamma_M(\sigma_m).
\end{align*}
Also, for two objects $\sigma = \sigma_1 \oplus \dots \oplus \sigma_r$ and $\tau = \tau_1 \oplus \dots \oplus \tau_s$ in $\mathcal{T}$ we note that
\begin{align*}
	\Hom_G(V_\sigma,V_\tau) = \bigoplus_{i,j} \Hom_G(V_{\sigma_i},V_{\tau_j}),
\end{align*}
and hence each morphism $T:V_\sigma \to V_\tau$ can be uniquely written as a matrix with entries in $\Hom_G(V_{\sigma_i},V_{\tau_j})$ for all eligible pairs $i,j$.
Given such a morphism $T=(T_{ij})$, we put 
\begin{align*}
	\Gamma_M(T) := \left(\Gamma_M(T_{ij})\right)
\end{align*}
With these definitions, it is easily checked that Lemma~\ref{lem:lin} and Lemma~\ref{lem:*-inv} extend to objects in $\mathcal{T}$ and morphisms between them.
Summarizing, we have thus shown:

\begin{corollary}\label{cor:linfun}
	$\mathcal{T}$ is a small tensor subcategory of $\rep(SO(n))$ containing representatives of $\Irrep(\SO(n))$.
	Furthermore, the map $\Gamma_M: \mathcal{T} \to \corr(\aB)$, defined for objects by $\Gamma_M(\sigma)$ and for morphisms by $\Gamma_M(T)$, is a linear functor such that $\Gamma_M\big(\pi^{\otimes k}\big) = M^{\tensor k}$ for all integers $k \geq 0$ and $\Gamma_M(T)^* = \Gamma_M(T^*)$ for all morphisms $T$ in $\mathcal{T}$.
\end{corollary}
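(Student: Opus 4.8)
The plan is to combine the constructions and lemmas set up just above the corollary into one functorial statement. First, I would dispatch the claim about $\mathcal{T}$. By Corollary~\ref{cor:tensorprodSO(n)} every class in $\Irrep(\SO(n))$ has a representative occurring as a subrepresentation of some $\pi^{\otimes k}$, and we fixed such representatives --- among them $(\one,\C)$ and $(\pi,V)$ --- so $\mathcal{S}$, and hence $\mathcal{T}$, contains a representative of every irreducible class. The objects of $\mathcal{S}$ are indexed by the finite sequences of elements of $\Irrep(\SO(n))$ and those of $\mathcal{T}$ by the finite sequences of objects of $\mathcal{S}$; both index sets are sets, so $\mathcal{T}$ is small. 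Concatenating the defining sequences of two objects of $\mathcal{S}$ again yields an object of $\mathcal{S}$, and $\tensor$ distributes over finite direct sums, so $\mathcal{T}$ is closed under $\tensor$; it is obviously closed under finite direct sums, contains $\one$, and inherits the $*$-operation and the norms from $\rep(\SO(n))$. Hence $\mathcal{T}$ is a small C\Star-tensor subcategory of $\rep(\SO(n))$ containing representatives of $\Irrep(\SO(n))$.

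Then I would check that $\Gamma_M$ is a well-defined linear functor on $\mathcal{S}$ with the asserted properties, the crucial point being that everything rests on canonical data. An object $\sigma = \sigma_1 \tensor \cdots \tensor \sigma_r$ of $\mathcal{S}$ carries the canonical embedding into $\pi^{\otimes k}$, with $k := k_1 + \dots + k_r$, obtained by tensoring the fixed embeddings $\sigma_i \hookrightarrow \pi^{\otimes k_i}$; thus the integer $k$ and the orthogonal projection $P_\sigma \in C_{k,k}$ of $V^{\otimes k}$ onto $V_\sigma$ are determined by $\sigma$, and for $\sigma = \pi^{\otimes k}$ one has $P_\sigma = \id$, whence $\Gamma_M(\pi^{\otimes k}) = \varphi_{k,k}(\id)(M^{\tensor k}) = M^{\tensor k}$ by Condition~\hyperref[def:pi-corr]{(U)}. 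Applying Conditions~\hyperref[def:pi-corr]{(C) and (A)} to the self-adjoint idempotent $P_\sigma$ shows that $\varphi_{k,k}(P_\sigma)$ is again a self-adjoint idempotent, so its range~\eqref{eq:Gamma_M} is a complemented subobject of $M^{\tensor k}$, hence a correspondence over $\aB$. For a morphism $T : V_\sigma \to V_\tau$ in $\mathcal{S}$ the operator $W_T \in C_{k,l}$ is likewise canonically attached to $T$, and since $P_\tau W_T = W_T$, Condition~\hyperref[def:pi-corr]{(C)} gives $\varphi_{l,l}(P_\tau)\varphi_{k,l}(W_T) = \varphi_{k,l}(W_T)$, so $\varphi_{k,l}(W_T)$ maps $M^{\tensor k}$ into $\Gamma_M(\tau)$; restricting to $\Gamma_M(\sigma)$ defines $\Gamma_M(T)$, and $\Gamma_M(\id_{V_\sigma}) = \id$ because $W_{\id_{V_\sigma}} = P_\sigma$. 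Linearity and functoriality of $\Gamma_M$ on $\mathcal{S}$ then follow from $W_{T+cT'} = W_T + cW_{T'}$, $W_{T'T} = W_{T'}W_T$ and Condition~\hyperref[def:pi-corr]{(C)} (this is Lemma~\ref{lem:lin}), while $\Gamma_M(T)^* = \Gamma_M(T^*)$ follows from $W_T^* = W_{T^*}$ and Condition~\hyperref[def:pi-corr]{(A)} (this is Lemma~\ref{lem:*-inv}).

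Finally, I would pass to $\mathcal{T}$: an object $\sigma = \sigma_1 \oplus \dots \oplus \sigma_m$ of $\mathcal{T}$ is sent to the correspondence $\bigoplus_i \Gamma_M(\sigma_i)$, and a morphism $T$, written as a matrix $(T_{ij})$ relative to the given decompositions, to $(\Gamma_M(T_{ij}))$. Since composition of morphisms and composition of the operators $\Gamma_M(T)$ are in both cases matrix multiplication, and since $(T^*)_{ji} = (T_{ij})^*$, the conclusions of Lemmas~\ref{lem:lin} and~\ref{lem:*-inv} propagate entrywise to $\mathcal{T}$; and since $\pi^{\otimes k}$ is already an object of $\mathcal{S}$, the identity $\Gamma_M(\pi^{\otimes k}) = M^{\tensor k}$ persists. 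This yields the corollary. I expect the only genuine point of care to be the well-definedness of $\Gamma_M$ on objects --- that tensoring the fixed embeddings furnishes a canonical pair $(k,P_\sigma)$ for each object of $\mathcal{S}$ --- together with the verification that $\varphi_{k,k}(P_\sigma)$ is a projection, which forces one to invoke Conditions~\hyperref[def:pi-corr]{(C)} and~\hyperref[def:pi-corr]{(A)} in tandem.
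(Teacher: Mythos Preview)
Your proposal is correct and follows essentially the same route as the paper: the corollary is stated there as a summary of the preceding construction, and you have faithfully reconstructed that construction --- defining $\Gamma_M$ on $\mathcal{S}$ via the canonical embeddings into $\pi^{\otimes k}$, invoking Conditions~\hyperref[def:pi-corr]{(C), (A), (U)} and Lemmas~\ref{lem:lin}, \ref{lem:*-inv}, and then passing to $\mathcal{T}$ by direct sums and matrix decompositions. Your explicit attention to the smallness of $\mathcal{T}$ and to the canonicity of the pair $(k,P_\sigma)$, as well as your remark that $(\pi,V)$ must be among the chosen representatives, are points the paper leaves implicit but which are indeed needed.
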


Having completed the first task, we now turn to the construction of natural, $\aB$-bilinear, and unitary maps $m_M(\sigma,\tau) : \Gamma_M(\sigma) \tensor_\aB \Gamma_M(\tau) \to \Gamma_M(\sigma \tensor \tau)$ for all $\sigma,\tau \in\mathcal{T}$.
To this end, we consider the canonical multiplication~maps $m_M(k,l) : M^{\tensor k} \tensor_\aB M^{\tensor l} \to M^{\tensor (k+l)}$ for all integers $k,l \geq 0$, which are obviously $\aB$-bilinear and unitary, and note that
\begin{align}\label{eq:compmult}
	m_M(k,l+m) \left( \id \tensor_\aB \, m_M(l,m) \right) = m_M(k+l,m) \left( m_M(k,l) \tensor_\aB \id \right)
\end{align}
for all integers $k,l,m \geq 0$.
Furthermore, we impose another condition ensuring that the maps $\varphi_{k,l}$ for all integers $k,l \geq 0$ are compatible with respect to taking tensor products:

\begin{defn}\label{def:pi-corr_tensor}
	Let $\aB$ be a unital C\Star-algebra and let $\pi$ be the standard representation of $\SO(n)$, $n \geq 3$.
	We say that a \emph{correspondence $M$ over $\aB$ is tensorial of type~$\pi$} if it is of type~$\pi$ (\cf~Definition~\ref{def:pi-corr}) and the following compatibility condition is satisfied:
	\begin{enumerate}[nosep]
	\item[(T)]
		$m_M(k,l) \big( \varphi_{k,k}(T) \tensor_\aB \varphi_{l,l}(T') \big) = \varphi_{k+l}(T \tensor T') m_M(k,l)$ for all $k,l \geq 0$, $T \in C_{k,k}$, and $T' \in C_{l,l}$.
	\end{enumerate}
\end{defn}

\begin{remark}\label{rem:free=pi-corr_tensor}
	Clearly, each free C\Star-dynamical system $(\aA,G,\alpha)$ with fixed point algebra $\aB$ gives rise to a correspondence over $\aB$ that is tensorial of type $\pi$ by looking at $\Gamma_\aA(\pi)$.
\end{remark}

In the remainder of this section we assume that $M$ is tensorial of type~$\pi$.
Let $(\sigma,V_\sigma)$ and $(\tau,V_\tau)$ be objects in $\mathcal{S}$, let $k,l \geq 0$ be integers such that $(\sigma,V_\sigma)$ and $(\tau,V_\tau)$ are subrepresentations of $\big(\pi^{\otimes k},V^{\otimes k}\big)$ and  $\big(\pi^{\otimes l},V^{\otimes l}\big)$, respectively, and let $P_\sigma$ and $P_\tau$ be the orthogonal projections of $V^{\otimes k}$ onto~$V_\sigma$ and $V^{\otimes l}$ onto~$V_\tau$, respectively.
Then $(\sigma \tensor \tau,V_\sigma \tensor V_\tau)$ is a subrepresentation of $\big(\pi^{\otimes (k+l)},V^{\otimes (k+l)}\big)$ and $P_\sigma \tensor P_\tau$ is the orthogonal~projection of $V^{\otimes (k+l)}$ onto~$V_\sigma \tensor V_\tau$.
We put
\begin{gather}
	m_M(\sigma,\tau): \Gamma_M(\sigma) \otimes_\aB \Gamma_M(\tau) \to \Gamma_M(\sigma \tensor \tau) \notag
	\\
	m_M(\sigma,\tau) := \underbrace{\varphi_{k+l}(P_\sigma \tensor P_\tau)}_{=\Gamma_M(P_\sigma \tensor P_\tau)} m_M(k,l) \restriction_{\Gamma_M(\sigma) \otimes_\aB \Gamma_M(\tau)}^{\Gamma_M(\sigma \tensor \tau)} 
	\label{eq:m(sigma,tau)}
\end{gather}
It is readily seen that the map $m_M(\sigma,\tau)$ is well-defined and $\aB$-bilinear.
Furthermore, Condition~\hyperref[def:pi-corr_tensor]{(T)} shows that 
\begin{align}\label{eq:m(sigma,tau)_alt}
	m_M(\sigma,\tau) = m_M(k,l) \big(  \underbrace{\varphi_{k,k}(P_\sigma) \tensor_\aB \varphi_{l,l}(P_\tau)}_{=\Gamma_M(P_\sigma) \otimes_\aB \Gamma_M(P_\tau)} \big) \restriction_{\Gamma_M(\sigma) \otimes_\aB \Gamma_M(\tau)}^{\Gamma_M(\sigma \tensor \tau)} 
\end{align}
We proceed with a series of~lemmas.

\begin{lemma}\label{lem:natural}
	We have $m_M(\sigma,\tau) \left( \Gamma_M(T) \otimes_\aB \Gamma_M(T') \right) = \Gamma_M(T \tensor T') m_M(\sigma,\tau)$ for all morphisms $T:V_\sigma \to V_\sigma$ and $T':V_\tau \to V_\tau$.
%, \ie, $m_M(\sigma,\tau)$ behaves natural with respect to intertwiners.
\end{lemma}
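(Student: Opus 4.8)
The plan is to check the identity on elementary tensors $x \tensor_\aB y$ with $x \in \Gamma_M(\sigma)$ and $y \in \Gamma_M(\tau)$, which suffices since every map involved is $\aB$-linear and bounded. First I would fix integers $k,l \geq 0$ such that $(\sigma,V_\sigma)$ and $(\tau,V_\tau)$ are subrepresentations of $\big(\pi^{\tensor k},V^{\tensor k}\big)$ and $\big(\pi^{\tensor l},V^{\tensor l}\big)$, together with the corresponding orthogonal projections $P_\sigma$ and $P_\tau$, exactly as in the construction preceding the lemma, and recall the explicit formulas: $\Gamma_M(T)$, $\Gamma_M(T')$, and $\Gamma_M(T \tensor T')$ are the restrictions to the relevant subspaces of $\varphi_{k,k}(W_T)$, $\varphi_{l,l}(W_{T'})$, and $\varphi_{k+l,k+l}(W_{T \tensor T'})$, respectively. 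The other ingredient is that, by~\eqref{eq:m(sigma,tau)_alt}, the restriction of $m_M(\sigma,\tau)$ to $\Gamma_M(\sigma) \tensor_\aB \Gamma_M(\tau)$ agrees with the plain multiplication map $m_M(k,l)$, because $\varphi_{k,k}(P_\sigma)$ and $\varphi_{l,l}(P_\tau)$ act as the identity on $\Gamma_M(\sigma)$ and $\Gamma_M(\tau)$; moreover $m_M(k,l)$ then maps $\Gamma_M(\sigma) \tensor_\aB \Gamma_M(\tau)$ into $\Gamma_M(\sigma \tensor \tau)$.

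The one genuinely new ingredient is the purely linear-algebraic identity $W_T \tensor W_{T'} = W_{T \tensor T'}$ of endomorphisms of $V^{\tensor(k+l)} = V^{\tensor k} \tensor V^{\tensor l}$, which I would obtain by splitting $V^{\tensor(k+l)} = (V_\sigma \oplus V_\sigma^\perp) \tensor (V_\tau \oplus V_\tau^\perp)$ into its four tensor summands and observing that both operators restrict to $T \tensor T'$ on $V_\sigma \tensor V_\tau$ and vanish on the other three. Granting this, the computation is immediate. For $x \tensor_\aB y$ as above, $\Gamma_M(T)(x) \tensor_\aB \Gamma_M(T')(y)$ again lies in $\Gamma_M(\sigma) \tensor_\aB \Gamma_M(\tau)$, so applying the description of $m_M(\sigma,\tau)$ and then Condition~\hyperref[def:pi-corr_tensor]{(T)} (with $W_T$, $W_{T'}$ in place of $T$, $T'$) gives
\begin{align*}
	m_M(\sigma,\tau)\big( \Gamma_M(T)(x) \tensor_\aB \Gamma_M(T')(y) \big)
	&= \varphi_{k+l,k+l}(W_T \tensor W_{T'})\big( m_M(k,l)(x \tensor_\aB y) \big),
\end{align*}
whereas, since $m_M(\sigma,\tau)(x \tensor_\aB y) = m_M(k,l)(x \tensor_\aB y) \in \Gamma_M(\sigma \tensor \tau)$,
\begin{align*}
	\Gamma_M(T \tensor T')\big( m_M(\sigma,\tau)(x \tensor_\aB y) \big)
	&= \varphi_{k+l,k+l}(W_{T \tensor T'})\big( m_M(k,l)(x \tensor_\aB y) \big).
\end{align*}
By the identity $W_T \tensor W_{T'} = W_{T \tensor T'}$ the two right-hand sides coincide, which proves the claim.

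I do not anticipate a real obstacle here; the delicate point is merely the bookkeeping of the several restriction maps and the identification of $m_M(\sigma,\tau)$ with $m_M(k,l)$ on the relevant submodule, and Condition~\hyperref[def:pi-corr_tensor]{(T)} is precisely what supplies the needed intertwining. A slightly slicker variant argues with operators throughout rather than fixing $x \tensor_\aB y$, using $W_{T \tensor T'} = (P_\sigma \tensor P_\tau)(W_T \tensor W_{T'})$ together with Conditions~\hyperref[def:pi-corr]{(C)} and~\hyperref[def:pi-corr_tensor]{(T)}; the content is identical.
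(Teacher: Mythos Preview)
Your argument is correct and follows essentially the same route as the paper: both proofs hinge on Condition~(T) applied to $W_T$ and $W_{T'}$ together with the linear-algebraic identity $W_T \tensor W_{T'} = W_{T\tensor T'}$, which the paper uses tacitly and you make explicit. The only cosmetic difference is that you invoke \eqref{eq:m(sigma,tau)_alt} to reduce $m_M(\sigma,\tau)$ to $m_M(k,l)$ on the domain and thereby bypass the projection-shuffling via Condition~(C), whereas the paper works from \eqref{eq:m(sigma,tau)} and uses~(C) to commute $\varphi_{k+l}(P_\sigma\tensor P_\tau)$ past $\varphi_{k+l}(W_{T\tensor T'})$---precisely the ``slicker variant'' you mention at the end.
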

\begin{proof}
	Let $T:V_\sigma \to V_\sigma$ and $T':V_\tau \to V_\tau$ be morphisms.
	Using~Conditions~\hyperref[def:pi-corr_tensor]{(T)} and~\hyperref[def:pi-corr]{(C)}, on the domain $\Gamma_M(\sigma) \tensor_\aB \Gamma_M(\tau)$ we deduce that
	\begingroup
	\allowdisplaybreaks
	\begin{align*}
		m_M(\sigma,\tau) \left( \Gamma_M(T) \otimes_\aB \Gamma_M(T') \right)
		&=
		\varphi_{k+l}(P_\sigma \tensor P_\tau) m_M(k,l) \left( \varphi_{k,k}(W_T) \otimes_\aB \varphi_{l,l}(W_{T'}) \right)
		\\
		&=
		\varphi_{k+l}(P_\sigma \tensor P_\tau) \varphi_{k+l}(W_{T \tensor T'}) m_M(k,l)
		\\
		&=
		\varphi_{k+l}((P_\sigma \tensor P_\tau) W_{T \tensor T'}) m_M(k,l)
		\\
		&=
		\varphi_{k+l}(W_{T \tensor T'} (P_\sigma \tensor P_\tau)) m_M(k,l)
		\\
		&=
		\varphi_{k+l}(W_{T \tensor T'}) \varphi_{k+l}(P_\sigma \tensor P_\tau) m_M(k,l)
		\\
		&=
		\varphi_{k+l}(W_{T \tensor T'}) m_M(\sigma,\tau)
		=
		\Gamma_M(T \tensor T') m_M(\sigma,\tau).
		\qedhere
	\end{align*}
	\endgroup
\end{proof}

\begin{lemma}\label{lem:isometric}
	The map $m_M(\sigma,\tau)$ is isometric.
\end{lemma}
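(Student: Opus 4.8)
The plan is to reduce the claim to the fact that the canonical multiplication map $m_M(k,l): M^{\tensor k} \tensor_\aB M^{\tensor l} \to M^{\tensor(k+l)}$ is unitary, together with Conditions \hyperref[def:pi-corr]{(A)} and \hyperref[def:pi-corr]{(C)}. First I would fix objects $(\sigma,V_\sigma)$ and $(\tau,V_\tau)$ in $\mathcal{S}$, choose $k,l \geq 0$ with $(\sigma,V_\sigma) \subseteq \big(\pi^{\otimes k},V^{\otimes k}\big)$ and $(\tau,V_\tau) \subseteq \big(\pi^{\otimes l},V^{\otimes l}\big)$, and work with the orthogonal projections $P_\sigma,P_\tau$ as in the construction of $m_M(\sigma,\tau)$. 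The key observation is that $\Gamma_M(P_\sigma) = \varphi_{k,k}(P_\sigma)$ is a self-adjoint projection on $M^{\tensor k}$ (this was already derived in the excerpt from Conditions \hyperref[def:pi-corr]{(C)} and \hyperref[def:pi-corr]{(A)}), and likewise for $\Gamma_M(P_\tau)$ and for $\varphi_{k+l}(P_\sigma \tensor P_\tau) = \Gamma_M(P_\sigma \tensor P_\tau)$ on $M^{\tensor(k+l)}$.

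Next I would take $\xi \in \Gamma_M(\sigma) \tensor_\aB \Gamma_M(\tau)$ and compute $\rprod{\aB}{m_M(\sigma,\tau)(\xi), m_M(\sigma,\tau)(\xi)}$ directly. Using the alternative expression \eqref{eq:m(sigma,tau)_alt}, namely $m_M(\sigma,\tau) = m_M(k,l)\big(\Gamma_M(P_\sigma) \tensor_\aB \Gamma_M(P_\tau)\big)$ restricted to the appropriate subspaces, and the fact that $m_M(k,l)$ is unitary hence inner-product preserving, this reduces to $\rprod{\aB}{\big(\Gamma_M(P_\sigma) \tensor_\aB \Gamma_M(P_\tau)\big)(\xi), \big(\Gamma_M(P_\sigma) \tensor_\aB \Gamma_M(P_\tau)\big)(\xi)}$. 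Since $\Gamma_M(P_\sigma)$ and $\Gamma_M(P_\tau)$ are self-adjoint projections, and since $\xi$ already lies in $\Gamma_M(\sigma) \tensor_\aB \Gamma_M(\tau) = \ran(\Gamma_M(P_\sigma)) \tensor_\aB \ran(\Gamma_M(P_\tau))$, the operator $\Gamma_M(P_\sigma) \tensor_\aB \Gamma_M(P_\tau)$ acts as the identity on $\xi$; here one should be slightly careful and note that $\Gamma_M(P_\sigma) \tensor_\aB \Gamma_M(P_\tau)$ is a well-defined adjointable operator on $M^{\tensor k} \tensor_\aB M^{\tensor l}$ which is itself a projection (tensor product of projections of correspondences), whose range is precisely $\Gamma_M(\sigma) \tensor_\aB \Gamma_M(\tau)$. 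Hence the whole expression collapses to $\rprod{\aB}{\xi,\xi}$, proving isometry.

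The main obstacle I anticipate is the bookkeeping around the restriction maps and making precise that $\Gamma_M(P_\sigma) \tensor_\aB \Gamma_M(P_\tau)$ is genuinely the orthogonal projection of $M^{\tensor k} \tensor_\aB M^{\tensor l}$ onto the interior tensor product $\Gamma_M(\sigma) \tensor_\aB \Gamma_M(\tau)$ — in particular that the internal tensor product of the compressed modules embeds isometrically via this projection, so that no inner-product defect is hidden in the passage $M^{\tensor k} \tensor_\aB M^{\tensor l} \rightsquigarrow \Gamma_M(\sigma) \tensor_\aB \Gamma_M(\tau)$. Once that identification is in place the computation is a one-line application of unitarity of $m_M(k,l)$ and idempotency of the projections, so the argument is short; alternatively one can avoid this point entirely by writing $m_M(\sigma,\tau) = \Gamma_M(P_\sigma \tensor P_\tau)\, m_M(k,l)\, \big(\Gamma_M(P_\sigma) \tensor_\aB \Gamma_M(P_\tau)\big)$ as a composite of partial isometries on honest tensor products and checking on $\Gamma_M(\sigma) \tensor_\aB \Gamma_M(\tau)$ that the initial projection is the identity.
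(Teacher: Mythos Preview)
Your proposal is correct and follows essentially the same route as the paper: use the alternative expression \eqref{eq:m(sigma,tau)_alt} to reduce to the unitarity of $m_M(k,l)$, noting that the projection $\Gamma_M(P_\sigma) \tensor_\aB \Gamma_M(P_\tau)$ acts as the identity on $\Gamma_M(\sigma) \tensor_\aB \Gamma_M(\tau)$. The paper simply works on elementary tensors $x \tensor y$ with $x \in \Gamma_M(\sigma)$ and $y \in \Gamma_M(\tau)$, which makes the ``obstacle'' you anticipated disappear immediately, since $\Gamma_M(P_\sigma)x = x$ and $\Gamma_M(P_\tau)y = y$ by definition of $\Gamma_M(\sigma)$ and $\Gamma_M(\tau)$.
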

\begin{proof}
	Let $x \in \Gamma_M(\sigma)$ and let $y \in \Gamma_M(\tau)$.
	By~\eqref{eq:m(sigma,tau)_alt}, we have
	\begin{align*}
		\rprod{\aB}{m_M(\sigma,\tau)(x \otimes_\aB y),m_M(\sigma,\tau)(x \otimes_\aB y)} 
		&= 
		\rprod{\aB}{m_M(k,l)(x \otimes_\aB y),m_M(k,l)(x \otimes_\aB y)}.
	\end{align*}
		The claim therefore follows from the fact that the map $m_M(k,l)$ is unitary.
\end{proof}

\begin{lemma}\label{lem:surjective}
	The map $m_M(\sigma,\tau)$ is surjective.
\end{lemma}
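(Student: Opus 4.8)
The plan is to exhibit, for each $z \in \Gamma_M(\sigma \tensor \tau)$, a preimage under $m_M(\sigma,\tau)$ built from the unitarity of the ambient multiplication map $m_M(k,l)$ and from the description of $m_M(\sigma,\tau)$ in~\eqref{eq:m(sigma,tau)_alt}. First I would recall that $\Gamma_M(\sigma \tensor \tau) = \Gamma_M(P_\sigma \tensor P_\tau)\big(M^{\tensor(k+l)}\big) = \varphi_{k+l,k+l}(P_\sigma \tensor P_\tau)\big(M^{\tensor(k+l)}\big)$, so any $z$ in it can be written as $z = \varphi_{k+l,k+l}(P_\sigma \tensor P_\tau)(w)$ for some $w \in M^{\tensor(k+l)}$, and since $P_\sigma \tensor P_\tau$ is a projection and $\varphi$ respects composition by Condition~\hyperref[def:pi-corr]{(C)}, we may even assume $w = z$, i.e.\ $\varphi_{k+l,k+l}(P_\sigma \tensor P_\tau)(z) = z$.

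Next I would use that $m_M(k,l) : M^{\tensor k} \tensor_\aB M^{\tensor l} \to M^{\tensor(k+l)}$ is unitary (hence surjective), so there is $u \in M^{\tensor k} \tensor_\aB M^{\tensor l}$ with $m_M(k,l)(u) = z$. The key move is then to project $u$ into $\Gamma_M(\sigma) \tensor_\aB \Gamma_M(\tau)$: set $u' := \big(\varphi_{k,k}(P_\sigma) \tensor_\aB \varphi_{l,l}(P_\tau)\big)(u)$, which lies in $\Gamma_M(\sigma) \tensor_\aB \Gamma_M(\tau)$ because $\Gamma_M(\sigma) = \varphi_{k,k}(P_\sigma)(M^{\tensor k})$ and likewise for $\tau$. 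Applying Condition~\hyperref[def:pi-corr_tensor]{(T)}, I compute
\begin{align*}
	m_M(k,l)(u')
	&= m_M(k,l)\big(\varphi_{k,k}(P_\sigma) \tensor_\aB \varphi_{l,l}(P_\tau)\big)(u)
	= \varphi_{k+l,k+l}(P_\sigma \tensor P_\tau)\, m_M(k,l)(u) \\
	&= \varphi_{k+l,k+l}(P_\sigma \tensor P_\tau)(z) = z,
\end{align*}
using the normalization established in the first paragraph. By~\eqref{eq:m(sigma,tau)_alt}, $m_M(\sigma,\tau)$ is precisely the restriction of $m_M(k,l)\big(\varphi_{k,k}(P_\sigma) \tensor_\aB \varphi_{l,l}(P_\tau)\big)$ to $\Gamma_M(\sigma) \tensor_\aB \Gamma_M(\tau)$; applying it to $u'$ and noting that $u'$ is already fixed by $\varphi_{k,k}(P_\sigma) \tensor_\aB \varphi_{l,l}(P_\tau)$ (again by Condition~\hyperref[def:pi-corr]{(C)} and the projection property), we get $m_M(\sigma,\tau)(u') = z$. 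Hence $m_M(\sigma,\tau)$ is surjective.

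I do not expect a serious obstacle here; the only point requiring a little care is the bookkeeping of which codomain the various maps land in — in particular, verifying that $u'$ genuinely sits in the tensor product $\Gamma_M(\sigma) \tensor_\aB \Gamma_M(\tau)$ rather than just in $M^{\tensor k} \tensor_\aB M^{\tensor l}$, and that the restricted map in~\eqref{eq:m(sigma,tau)_alt} agrees with the unrestricted one on that submodule. Both follow immediately from the idempotency of $\varphi_{k,k}(P_\sigma)$ and $\varphi_{l,l}(P_\tau)$ (consequences of Conditions~\hyperref[def:pi-corr]{(C)} and~\hyperref[def:pi-corr]{(A)}, exactly as used just before~\eqref{eq:Gamma_M}). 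Combined with Lemma~\ref{lem:isometric}, this lemma shows $m_M(\sigma,\tau)$ is unitary on the subcategory $\mathcal{S}$, which will then be extended to $\mathcal{T}$ by direct sums.
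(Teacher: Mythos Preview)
Your argument is correct and rests on exactly the same two ingredients as the paper's proof: the unitarity of the ambient multiplication $m_M(k,l)$ and Condition~\hyperref[def:pi-corr_tensor]{(T)} intertwining the projections. The only difference is presentational: the paper phrases it structurally---decomposing $M^{\tensor k}\tensor_\aB M^{\tensor l}$ as $\Gamma_M(\sigma)\tensor_\aB\Gamma_M(\tau)\oplus\ker\big(\varphi_{k,k}(P_\sigma)\tensor_\aB\varphi_{l,l}(P_\tau)\big)$ and using (T) to identify this kernel with $\ker\big(\varphi_{k+l}(P_\sigma\tensor P_\tau)\,m_M(k,l)\big)$---whereas you chase an explicit preimage by projecting a lift $u$ down to $u'$. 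These are two sides of the same coin, and your version is arguably a bit more transparent.
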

\begin{proof}
	Our proof starts with the observation that 
	\begin{align*}
		M^{\otimes k} \otimes_\aB M^{\otimes l} = \Gamma_M(\sigma) \tensor_\aB \Gamma_M(\tau) \oplus \ker\left( \varphi_{k,k}(P_\sigma) \tensor_\aB \varphi_{l,l}(P_\tau) \right).
	\end{align*}
	Furthermore, we have $\ker\left( \varphi_{k+l}(P_\sigma \tensor P_\tau) m_M(k,l) \right) = \ker\left( \varphi_{k,k}(P_\sigma) \tensor_\aB \varphi_{l,l}(P_\tau) \right)$ which is clear from Condition~\hyperref[def:pi-corr_tensor]{(T)}.
	Since $\ran\left( \varphi_{k+l}(P_\sigma \tensor P_\tau) m_M(k,l) \right) = \Gamma_M(\sigma \tensor \tau)$,  it follows that $\ran(m_M(\sigma,\tau)) =\Gamma_M(\sigma \tensor \tau)$ as required.
\end{proof}

To summarize:

\begin{corollary}\label{cor:unitary_S}
	The map $m_M(\sigma,\tau) : \Gamma_M(\sigma) \tensor_\aB \Gamma_M(\tau) \to \Gamma_M(\sigma \tensor \tau)$ given by~\eqref{eq:m(sigma,tau)} is natural, $\aB$-bilinear, and unitary for all objects $\sigma,\tau$ in $\mathcal{S}$.
\end{corollary}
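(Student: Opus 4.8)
The plan is to assemble the preceding lemmas. Well-definedness and $\aB$-bilinearity of $m_M(\sigma,\tau)$ were already recorded immediately after~\eqref{eq:m(sigma,tau)}: the map is the restriction of the composite $\varphi_{k+l}(P_\sigma \tensor P_\tau)\, m_M(k,l)$, each factor of which is $\aB$-bilinear, while independence of the auxiliary integers $k,l$ and of the chosen representatives is guaranteed by the identification of the relevant kernel and range carried out in Lemma~\ref{lem:surjective}. Naturality is precisely Lemma~\ref{lem:natural}. For unitarity I would combine Lemma~\ref{lem:isometric} and Lemma~\ref{lem:surjective}: $m_M(\sigma,\tau)$ is a surjective isometry of right Hilbert $\aB$-modules, hence bijective, and a bijective isometry is automatically adjointable with adjoint its inverse, since
\[
	\rprod{\aB}{m_M(\sigma,\tau)(x),y} = \rprod{\aB}{m_M(\sigma,\tau)(x),\, m_M(\sigma,\tau)\, m_M(\sigma,\tau)^{-1}(y)} = \rprod{\aB}{x,\, m_M(\sigma,\tau)^{-1}(y)}
\]
for all $x \in \Gamma_M(\sigma) \tensor_\aB \Gamma_M(\tau)$ and $y \in \Gamma_M(\sigma \tensor \tau)$; thus $m_M(\sigma,\tau)$ is unitary.

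The only delicate point I would flag concerns the word \emph{natural}. Lemma~\ref{lem:natural} yields the intertwining identity for endomorphisms $T:V_\sigma \to V_\sigma$ and $T':V_\tau \to V_\tau$ only, whereas naturality of the family $m_M$ as a transformation of bifunctors on $\mathcal{S}$ asks for it with respect to arbitrary morphisms $V_\sigma \to V_{\sigma'}$ and $V_\tau \to V_{\tau'}$. This follows by repeating the computation of Lemma~\ref{lem:natural} verbatim -- now with $\Gamma_M(S) = \varphi_{k,k'}(W_S)$ and $\Gamma_M(S') = \varphi_{l,l'}(W_{S'})$, using $W_S \tensor W_{S'} = W_{S \tensor S'}$ together with the evident off-diagonal version of Condition~(T) -- or, if one prefers to keep Condition~(T) in the stated square form, by deferring full naturality to the extension from $\mathcal{S}$ to $\mathcal{T}$, where it is needed for Theorem~\ref{thm:unitenfun} in any case. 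Beyond this I expect no real obstacle: the substantive input -- the kernel computation in Lemma~\ref{lem:surjective}, which underlies both surjectivity and well-definedness, and the repeated use of Condition~(T) -- has already been carried out, so the corollary is essentially a bookkeeping summary of Lemmas~\ref{lem:natural}, \ref{lem:isometric}, and~\ref{lem:surjective}.
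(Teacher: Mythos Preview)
Your proposal is correct and matches the paper's approach exactly: the paper offers no separate proof of this corollary at all, stating it simply as a summary of Lemmas~\ref{lem:natural}, \ref{lem:isometric}, and~\ref{lem:surjective}. Your remark that Lemma~\ref{lem:natural} only treats endomorphisms is a fair observation about the paper's own presentation rather than a defect in your argument; the paper silently leaves the general case implicit, and your suggested fix (rerunning the computation with Condition~(T)) is the intended one.
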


Note that $m_M(\pi^{\otimes k},\pi^{\otimes l}) = m_M(k,l)$ for all integers $k,l \geq 0$.
Our next claim is that the maps $m_M(\sigma,\tau)$ for all objects $\sigma,\tau$ in $\mathcal{S}$ satisfy the following associativity condition:

\begin{lemma}\label{lem:asso}
	We have 
	\begin{align*}
		m_M(\sigma, \tau \tensor \rho) \left( \id \tensor_\aB \, m_M(\tau, \rho) \right) 
		=
		m_M(\sigma \tensor \tau, \rho) \left( m_M(\sigma, \tau) \tensor_\aB \id \right)
	\end{align*}
	for all objects $\sigma, \tau, \rho$ in $\mathcal{S}$.
\end{lemma}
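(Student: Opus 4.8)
The plan is to reduce the associativity condition for the maps $m_M(\sigma,\tau)$ to the already-established associativity~\eqref{eq:compmult} of the canonical multiplication maps $m_M(k,l)$, using the explicit description~\eqref{eq:m(sigma,tau)_alt} together with Conditions~\hyperref[def:pi-corr]{(C)} and~\hyperref[def:pi-corr_tensor]{(T)}. First I would fix objects $(\sigma,V_\sigma)$, $(\tau,V_\tau)$, $(\rho,V_\rho)$ in $\mathcal{S}$ and integers $k,l,m \geq 0$ such that each is a subrepresentation of $\pi^{\otimes k}$, $\pi^{\otimes l}$, $\pi^{\otimes m}$ respectively, with $P_\sigma, P_\tau, P_\rho$ the corresponding orthogonal projections. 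I would then record that $\tau \tensor \rho$ sits inside $\pi^{\otimes(l+m)}$ with projection $P_\tau \tensor P_\rho$, that $\sigma \tensor \tau$ sits inside $\pi^{\otimes(k+l)}$ with projection $P_\sigma \tensor P_\tau$, and that both composite targets equal $\Gamma_M(\sigma \tensor \tau \tensor \rho)$, the image of $\varphi_{k+l+m}(P_\sigma \tensor P_\tau \tensor P_\rho)$ in $M^{\tensor(k+l+m)}$.

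Next I would expand the left-hand side. Using~\eqref{eq:m(sigma,tau)_alt} for $m_M(\sigma,\tau\tensor\rho)$ and for $m_M(\tau,\rho)$, the left-hand side becomes, on the domain $\Gamma_M(\sigma)\tensor_\aB\Gamma_M(\tau)\tensor_\aB\Gamma_M(\rho)$,
\begin{align*}
	m_M(k,l+m)\bigl(\varphi_{k,k}(P_\sigma) \tensor_\aB \varphi_{l+m}(P_\tau \tensor P_\rho)\bigr)\bigl(\id \tensor_\aB m_M(l,m)\bigr).
\end{align*}
Pulling $\varphi_{l+m}(P_\tau\tensor P_\rho)$ through $m_M(l,m)$ via Condition~\hyperref[def:pi-corr_tensor]{(T)} rewrites the tail as $m_M(l,m)\bigl(\varphi_{l,l}(P_\tau)\tensor_\aB\varphi_{m,m}(P_\rho)\bigr)$, so the left-hand side equals
\begin{align*}
	m_M(k,l+m)\bigl(\id \tensor_\aB m_M(l,m)\bigr)\bigl(\varphi_{k,k}(P_\sigma)\tensor_\aB\varphi_{l,l}(P_\tau)\tensor_\aB\varphi_{m,m}(P_\rho)\bigr).
\end{align*}
An entirely symmetric computation rewrites the right-hand side as $m_M(k+l,m)\bigl(m_M(k,l)\tensor_\aB\id\bigr)$ applied to the same operator $\varphi_{k,k}(P_\sigma)\tensor_\aB\varphi_{l,l}(P_\tau)\tensor_\aB\varphi_{m,m}(P_\rho)$. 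Since~\eqref{eq:compmult} gives $m_M(k,l+m)\bigl(\id\tensor_\aB m_M(l,m)\bigr) = m_M(k+l,m)\bigl(m_M(k,l)\tensor_\aB\id\bigr)$ as maps on $M^{\tensor k}\tensor_\aB M^{\tensor l}\tensor_\aB M^{\tensor m}$, the two sides agree.

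The main obstacle I anticipate is bookkeeping rather than conceptual depth: one must be careful that the various restrictions $\restriction$ to the correspondences $\Gamma_M(\sigma)\tensor_\aB\Gamma_M(\tau)$ etc.\ are handled consistently, that the identification $\Gamma_M((\sigma\tensor\tau)\tensor\rho) = \Gamma_M(\sigma\tensor(\tau\tensor\rho))$ is the obvious one coming from associativity of $\tensor$ on $V^{\tensor(k+l+m)}$, and that Condition~\hyperref[def:pi-corr_tensor]{(T)} is applied in the correct order (it moves a projection from before a multiplication map to after it, acting on the appropriate tensor legs). I would also note that the codomain issue — that $\ran(m_M(\sigma,\tau\tensor\rho))$ really is $\Gamma_M(\sigma\tensor\tau\tensor\rho)$ and similarly on the other side — is already covered by Lemma~\ref{lem:surjective}, so no separate argument is needed there. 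Once the two sides have been brought to the common form above, the proof closes with a single appeal to~\eqref{eq:compmult}.
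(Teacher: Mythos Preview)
Your proposal is correct and follows essentially the same route as the paper's proof: both reduce the associativity of $m_M(\sigma,\tau)$ to the associativity~\eqref{eq:compmult} of the canonical maps $m_M(k,l)$ by using Condition~\hyperref[def:pi-corr_tensor]{(T)} to commute the projections past the multiplication maps. The only cosmetic difference is that you work from the alternative description~\eqref{eq:m(sigma,tau)_alt} (projections on the right of $m_M(k,l)$), whereas the paper expands via~\eqref{eq:m(sigma,tau)} (projection on the left); your choice is slightly cleaner since the trailing operator $\varphi_{k,k}(P_\sigma)\tensor_\aB\varphi_{l,l}(P_\tau)\tensor_\aB\varphi_{m,m}(P_\rho)$ acts as the identity on the domain, avoiding the doubled projection that appears in the paper's chain of equalities.
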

\begin{proof}
	Let $(\sigma,V_\sigma)$, $(\tau,V_\tau)$, and $(\rho,V_\rho)$ be objects in $\mathcal{S}$.
	Furthermore, let $k$,  $l$ and $m$ be non-negative integers such that $(\sigma,V_\sigma)$, $(\tau,V_\tau)$, and $(\rho,V_\rho)$ are subrepresentations of $\big(\pi^{\otimes k},V^{\otimes k}\big)$, $\big(\pi^{\otimes l},V^{\otimes l}\big)$, and $\big(\pi^{\otimes m},V^{\otimes m}\big)$, respectively, and let $P_\sigma,P_\tau$, and $P_\rho$ be the orthogonal projections of $V^{\otimes k}$ onto~$V_\sigma$, $V^{\otimes l}$ onto~$V_\tau$, and $V^{\otimes m}$ onto~$V_\rho$, respectively.
	On the domain $\Gamma_M(\sigma) \tensor_\aB \Gamma_M(\tau) \tensor_\aB \Gamma_M(\rho)$ we find that
	\begingroup
	\allowdisplaybreaks
	\begin{align*}
		&m_M(\sigma, \tau \tensor \rho) \left( \id \tensor_\aB \, m_M(\tau, \rho) \right)
		\\
		\overset{\eqref{eq:m(sigma,tau)}}&{=}
		\varphi_{k+l+m}(P_\sigma \tensor (P_\tau \tensor P_\rho)) m_M(k,l+m) 
		\left( \varphi_{k,k}(P_\sigma) \tensor_\aB \left( \varphi_{l+m}(P_\tau \tensor P_\rho) m_M(l,m) \right) \right)
		\\
		\overset{\hyperref[def:pi-corr_tensor]{(\text{T})}}&{=}
		\varphi_{k+l+m}(P_\sigma \tensor P_\tau \tensor P_\rho) 
		\varphi_{k+l+m}(P_\sigma \tensor P_\tau \tensor P_\rho)
		m_M(k,l+m) \left( \id \tensor_\aB \, m_M(l,m) \right)
		\\
		\overset{\eqref{eq:compmult}}&{=}
		\varphi_{k+l+m}(P_\sigma \tensor P_\tau \tensor P_\rho)
		\varphi_{k+l+m}(P_\sigma \tensor P_\tau \tensor P_\rho)
		m_M(k+l,m) \left( m_M(k,l) \tensor_\aB \id \right)
		\\
		\overset{\hyperref[def:pi-corr_tensor]{(\text{T})}}&{=}
		\varphi_{k+l+m}((P_\sigma \tensor P_\tau) \tensor P_\rho) m_M(k+l,m) 
		\left( \varphi_{k+l}(P_\sigma \tensor P_\tau) m_M(k,l) \tensor_\aB \,\varphi_{m,m}(P_\rho) \right).
		\\
		\overset{\eqref{eq:m(sigma,tau)}}&{=}
		m_M(\sigma \tensor \tau, \rho) \left( m_M(\sigma, \tau) \tensor_\aB \id \right).
		\qedhere
	\end{align*}
	\endgroup
\end{proof}

Finally, we look once more at the small tensor category $\mathcal{T} \subseteq \rep(\SO(n))$.
For two objects $\sigma = \sigma_1 \oplus \dots \oplus \sigma_r$ and $\tau = \tau_1 \oplus \dots \oplus \tau_s$ in $\mathcal{T}$ we put
\begin{align*}
	m_M(\sigma,\tau) := \bigoplus_{i,j} m_M(\sigma_i,\tau_j).
\end{align*}
A straightforward verification shows that this formula yields a natural $\aB$-bilinear unitary map $m_M(\sigma,\tau) : \Gamma_M(\sigma) \tensor_\aB \Gamma_M(\tau) \to \Gamma_M(\sigma \tensor \tau)$ for all objects $\sigma,\tau$ in $\mathcal{T}$ satisfying 
\begin{align*}
	m_M(\sigma, \tau \tensor \rho) \left(\id \tensor_\aB \, m_M(\tau, \rho)\right) = m_M(\sigma \tensor \tau, \rho) \left(m_M(\sigma, \tau) \tensor_\aB \id\right)
\end{align*}
for all objects $\sigma, \tau, \rho$ in $\mathcal{T}$.
We can now present the main results of this paper:

\begin{theorem}\label{thm:unitenfun}
	The linear functor $\Gamma_M: \mathcal{T} \to \corr(\aB)$ together with the natural $\aB$-bilinear unitary maps $m_M(\sigma,\tau) : \Gamma_M(\sigma) \tensor_\aB \Gamma_M(\tau) \to \Gamma_M(\sigma \tensor \tau)$ for all $\sigma,\tau \in \mathcal{T}$ constitute a unitary tensor functor such that $\Gamma_M\big(\pi^{\otimes k}\big) = M^{\tensor k}$ for all integers $k \geq 0$.
\end{theorem}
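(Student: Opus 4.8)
The plan is to verify, one by one, the three defining conditions of a unitary tensor functor (Definition~\ref{defn:wutfun}), having already assembled all the pieces. We know from Corollary~\ref{cor:linfun} that $\Gamma_M : \mathcal{T} \to \corr(\aB)$ is a linear functor with $\Gamma_M(\pi^{\tensor k}) = M^{\tensor k}$, and from the discussion preceding the theorem that the maps $m_M(\sigma,\tau)$, defined on $\mathcal{S}$ by~\eqref{eq:m(sigma,tau)} and extended to $\mathcal{T}$ by direct sums, are natural, $\aB$-bilinear, and unitary (this combines Lemma~\ref{lem:natural} and Corollary~\ref{cor:unitary_S} together with the straightforward bookkeeping for direct sums). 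So the bulk of the work is already done, and what remains is to confirm that conditions (i), (ii), and (iii) hold.

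First I would check condition~(ii), that $\Gamma_M(T)^* = \Gamma_M(T^*)$ for all morphisms $T$ in $\mathcal{T}$; this is precisely the content of Lemma~\ref{lem:*-inv} together with its extension to $\mathcal{T}$ recorded after Corollary~\ref{cor:linfun}. Next, condition~(iii), the associativity (hexagon) identity $m_M(\sigma, \tau \tensor \rho)\,(\id \tensor_\aB m_M(\tau,\rho)) = m_M(\sigma \tensor \tau, \rho)\,(m_M(\sigma,\tau) \tensor_\aB \id)$: for objects in $\mathcal{S}$ this is Lemma~\ref{lem:asso}, and for general objects in $\mathcal{T}$ it follows by decomposing each object into its summands in $\mathcal{S}$ and using the matrix/direct-sum definitions of $\Gamma_M$ and $m_M$, as stated in the paragraph immediately above the theorem.

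Condition~(i) requires $\Gamma_M(\one) = \aB$ and that $m_M(\one, \sigma)$ and $m_M(\sigma, \one)$ are the left and right module actions. Here I would use that the chosen representative of the trivial class $\one \in \Irrep(\SO(n))$ is $(\one, \C) \subseteq (\pi^{\tensor 0}, V^{\tensor 0})$, so that $P_\one = \id$ on $V^{\tensor 0} = \C$ and hence, by Condition~\hyperref[def:pi-corr]{(U)}, $\Gamma_M(\one) = \varphi_{0,0}(\id)(M^{\tensor 0}) = M^{\tensor 0} = \aB$. For the module-action identities one unwinds~\eqref{eq:m(sigma,tau)}: with $\sigma \subseteq \pi^{\tensor k}$ one has $\one \tensor \sigma \subseteq \pi^{\tensor(0+k)} = \pi^{\tensor k}$ with $P_\one \tensor P_\sigma = P_\sigma$, and $m_M(0,k): \aB \tensor_\aB M^{\tensor k} \to M^{\tensor k}$ is the canonical multiplication, i.e.\ the left action; composing with $\Gamma_M(P_\one \tensor P_\sigma) = \varphi_{k,k}(P_\sigma)$ and restricting to $\Gamma_M(\sigma)$ gives exactly $b \tensor x \mapsto b \acts x$, and symmetrically for $m_M(\sigma, \one)$.

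The main obstacle, if any, is not a single hard step but rather making sure the direct-sum extension from $\mathcal{S}$ to $\mathcal{T}$ is handled cleanly: one must check that the definitions $\Gamma_M(\sigma) = \bigoplus_i \Gamma_M(\sigma_i)$, $\Gamma_M(T) = (\Gamma_M(T_{ij}))$, and $m_M(\sigma,\tau) = \bigoplus_{i,j} m_M(\sigma_i,\tau_j)$ are independent of the chosen decompositions into irreducibles and are mutually compatible under composition and tensor products, so that naturality and the hexagon identity genuinely descend from $\mathcal{S}$ to $\mathcal{T}$. Since $\mathcal{S}$ is closed under tensor products and every object of $\mathcal{T}$ is a finite direct sum of objects of $\mathcal{S}$, all of this is routine block-matrix calculus, and the theorem follows by assembling (i), (ii), (iii).
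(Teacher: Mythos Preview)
Your proposal is correct and matches the paper's approach: the paper does not supply a separate proof for Theorem~\ref{thm:unitenfun} but presents it as the culmination of the preceding results (Corollary~\ref{cor:linfun}, Lemma~\ref{lem:natural}, Corollary~\ref{cor:unitary_S}, Lemma~\ref{lem:asso}, and the direct-sum extension to $\mathcal{T}$ declared a ``straightforward verification''), which is exactly the assembly you carry out. Your explicit check of condition~(i) via $\pi^{\tensor 0}$ and Condition~\hyperref[def:pi-corr]{(U)} is a detail the paper leaves tacit; one minor remark is that objects of $\mathcal{T}$ come with a \emph{given} direct-sum decomposition into objects of $\mathcal{S}$ (not merely into irreducibles), so the well-definedness issue you flag does not actually arise.
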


Having the unitary tensor functor $\mathcal{T} \to \corr(\aB)$ at our disposal, we can now apply the construction procedure presented in~\cite[Thm.~2.3]{Ne13} (see also~\cite[Thm.~5.6]{SchWa17}) in order to obtain a free C\Star-dynamical system $(\aA_M,\SO(n),\alpha_M)$ with fixed point algebra $\aB$ and $\Gamma_{\aA_M}(\pi) \cong M$.
This is the desired \emph{noncommutative frame bundle associated with~$M$}.
For the sake of expediency we briefly sketch the main steps of the construction of $(\aA_M,\SO(n),\alpha_M)$ in Appendix~\ref{sec:construction}.

\begin{corollary}\label{cor:main1}
	Let $\aB$ be a unital C\Star-algebra and let $\pi$ be the standard representation of $\SO(n)$, $n \geq 3$.
	Each correspondence $M$ over~$\aB$ that is tensorial of type~$\pi$ yields a unitary tensor functor $\mathcal{T} \to \corr(\aB)$ for some small tensor subcategory $\mathcal{T}$ of $\rep(SO(n))$ containing representatives of $\Irrep(\SO(n))$ such that $\Gamma_M\big(\pi^{\otimes k}\big) = M^{\tensor k}$ for all integers $k \geq 0$, and hence a free C\Star-dynamical system $(\aA_M,\SO(n),\alpha_M)$ with fixed point algebra $\aB$ and $\Gamma_{\aA_M}(\pi) \cong \Gamma_M(\pi) = M$.
\end{corollary}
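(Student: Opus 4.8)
The plan is to assemble the results established in this section. First, since $M$ is tensorial of type~$\pi$, Corollary~\ref{cor:linfun} produces a small tensor subcategory $\mathcal{T} \subseteq \rep(\SO(n))$ together with a linear functor $\Gamma_M : \mathcal{T} \to \corr(\aB)$ satisfying $\Gamma_M\big(\pi^{\otimes k}\big) = M^{\tensor k}$ for all $k \geq 0$ and $\Gamma_M(T)^* = \Gamma_M(T^*)$ for all morphisms $T$; that $\mathcal{T}$ contains representatives of $\Irrep(\SO(n))$ is guaranteed by Corollary~\ref{cor:tensorprodSO(n)}, which is precisely why each irreducible representative could be chosen inside some $\pi^{\otimes k}$. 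Theorem~\ref{thm:unitenfun} then upgrades $\Gamma_M$ to a genuine unitary tensor functor by equipping it with the natural $\aB$-bilinear unitary maps $m_M(\sigma,\tau)$ built in Corollary~\ref{cor:unitary_S} and shown to be associative in Lemma~\ref{lem:asso}.

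Second, I would feed this unitary tensor functor into the reconstruction machinery. By Remark~\ref{rem:subcat} together with \cite[Thm.~2.3]{Ne13} (see also \cite[Thm.~5.6]{SchWa17}), a unitary tensor functor defined on a small tensor subcategory of $\rep(\SO(n))$ containing representatives of $\Irrep(\SO(n))$ determines, uniquely up to isomorphism, a free C\Star-dynamical system $(\aA_M,\SO(n),\alpha_M)$ with fixed point algebra $\aB$, in such a way that the associated functor $\Gamma_{\aA_M}$ restricted to $\mathcal{T}$ is naturally isomorphic --- as a unitary tensor functor --- to $(\Gamma_M, m_M)$; freeness of $(\aA_M,\SO(n),\alpha_M)$ is automatic from the unitarity (as opposed to mere isometry) of the maps $m_M(\sigma,\tau)$. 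Evaluating this natural isomorphism at the object $\pi$ yields $\Gamma_{\aA_M}(\pi) \cong \Gamma_M(\pi)$, and $\Gamma_M(\pi) = M$ is the $k=1$ instance of $\Gamma_M\big(\pi^{\otimes k}\big) = M^{\tensor k}$. This is the desired noncommutative frame bundle associated with~$M$.

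Since each ingredient has already been established, there is no substantial obstacle; the only points that deserve attention are that the reconstruction theorem must be invoked in its subcategory form (Remark~\ref{rem:subcat}) rather than for the full category $\rep(\SO(n))$, and that one keeps track of the compatibility of the reconstruction with the tensor structure, so that the identification $\Gamma_{\aA_M}(\pi) \cong M$ is genuinely an isomorphism of correspondences over~$\aB$. The concrete steps carrying out this reconstruction are recalled in Appendix~\ref{sec:construction}.
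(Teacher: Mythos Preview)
Your proposal is correct and follows essentially the same approach as the paper: the corollary is obtained by combining Theorem~\ref{thm:unitenfun} (which in turn packages Corollary~\ref{cor:linfun}, Corollary~\ref{cor:unitary_S}, and Lemma~\ref{lem:asso}) with the reconstruction theorem \cite[Thm.~2.3]{Ne13} applied via Remark~\ref{rem:subcat}. The paper presents exactly this chain of deductions in the paragraph preceding the corollary and defers the explicit construction of $(\aA_M,\SO(n),\alpha_M)$ to Appendix~\ref{sec:construction}, just as you indicate.
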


\begin{remark}
	By the general theory of free C\Star-dynamical systems, it follows that $M$, and hence $\Gamma_M(\sigma)$ for all $\sigma \in \mathcal{T}$, is necessarily finitely generated and projective as a right $\aB$-module (see,~\eg,~\cite[Thm.~1.2]{CoYa13a}).
\end{remark}

We conclude this section with a classification result that extends the classical correspondence between frame bundles and their associated vector bundles.
For this, we consider equivalence classes of free C\Star-dynamical systems with structure group $\SO(n)$ and fixed point algebra $\aB$ with respect to equivariant \Star-isomorphisms that preserve $\aB$ and, further, equivalence classes of correspondences over $\aB$ with respect to $\aB$-bilinear isomorphisms.
Combining Corollary~\ref{cor:main1} with~\cite[Thm.~2.3]{Ne13}, we get:

\begin{corollary}\label{cor:main2}
	Let $\aB$ be a unital C\Star-algebra and let $\pi$ be the standard representation of $\SO(n)$, $n \geq 3$.
	The map $[(\aA,\SO(n),\alpha)] \mapsto [\Gamma_\aA(\pi)]$ yields a bijective correspondence between equivalence classes of free C\Star-dynamical systems with structure group $\SO(n)$ and fixed point algebra $\aB$ and equivalence classes of correspondences over~$\aB$ that are tensorial of type~$\pi$ with inverse given by $[M] \mapsto [(\aA_M,\SO(n),\alpha_M)]$.
\end{corollary}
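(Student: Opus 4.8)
The plan is to show that the two assignments $[(\aA,\SO(n),\alpha)] \mapsto [\Gamma_\aA(\pi)]$ and $[M] \mapsto [(\aA_M,\SO(n),\alpha_M)]$ are well-defined and mutually inverse. First I would check that the target of the forward map lands in the right place: by Remark~\ref{rem:free=pi-corr_tensor}, for any free C\Star-dynamical system $(\aA,\SO(n),\alpha)$ with fixed point algebra $\aB$ the correspondence $\Gamma_\aA(\pi)$ is tensorial of type~$\pi$, so the map does take values in the stated set of equivalence classes. Well-definedness on equivalence classes is routine: an equivariant \Star-isomorphism $\Psi:\aA \to \aA'$ fixing $\aB$ induces, via $\Psi \tensor \id_{V_\pi}$, a $\aB$-bilinear isomorphism $\Gamma_\aA(\pi) \to \Gamma_{\aA'}(\pi)$ of correspondences. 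Conversely, the backward map is well-defined because, by Corollary~\ref{cor:main1}, each $M$ tensorial of type~$\pi$ produces the free C\Star-dynamical system $(\aA_M,\SO(n),\alpha_M)$ via the unitary tensor functor $\Gamma_M$, and two $\aB$-bilinearly isomorphic such $M$ yield isomorphic unitary tensor functors (one transports the isometries $m_M(\sigma,\tau)$ along the isomorphism), hence equivariantly isomorphic C\Star-dynamical systems by the functoriality built into~\cite[Thm.~2.3]{Ne13}.

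Next I would verify the composite $[M] \mapsto [(\aA_M,\SO(n),\alpha_M)] \mapsto [\Gamma_{\aA_M}(\pi)]$ is the identity. This is precisely the content already extracted in Corollary~\ref{cor:main1}: the construction guarantees $\Gamma_{\aA_M}(\pi) \cong \Gamma_M(\pi) = M$ as correspondences over $\aB$, the last equality being Condition~\hyperref[def:pi-corr]{(U)} (which forces $\Gamma_M(\pi^{\otimes 1}) = M^{\tensor 1} = M$). So this direction is immediate from the material already assembled.

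The other composite, $[(\aA,\SO(n),\alpha)] \mapsto [\Gamma_\aA(\pi)] \mapsto [(\aA_{\Gamma_\aA(\pi)},\SO(n),\alpha_{\Gamma_\aA(\pi)})]$, being the identity, is where the work sits, and here I would invoke the reconstruction theorem. The point recalled in the preliminaries (see the paragraph on C\Star-dynamical systems and~\cite[Sec.~2]{Ne13}) is that $(\aA,G,\alpha)$ can be recovered up to equivariant isomorphism from the weak unitary tensor functor $(\Gamma_\aA, m_\aA)$ on $\rep(G)$ — equivalently, by Remark~\ref{rem:subcat}, from its restriction to any small tensor subcategory $\mathcal{T}$ containing representatives of $\Irrep(G)$. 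Thus it suffices to identify, up to natural unitary monoidal isomorphism, the unitary tensor functor $(\Gamma_M, m_M)$ built from $M := \Gamma_\aA(\pi)$ with the restriction of $(\Gamma_\aA, m_\aA)$ to $\mathcal{T}$. The natural candidate for the comparison is, on $\Gamma_M(\pi^{\otimes k}) = M^{\tensor k} = \Gamma_\aA(\pi)^{\tensor k}$, the iterated multiplication isometry $m_\aA(\pi,\dots,\pi): \Gamma_\aA(\pi)^{\tensor k} \to \Gamma_\aA(\pi^{\otimes k})$, which is unitary because $(\aA,G,\alpha)$ is free (the weak unitary tensor functor of a free system is genuinely unitary, by the discussion around~\cite[Sec.~5]{SchWa17}); one then checks it intertwines the projections $\varphi_{k,l}(T)$ with $\Gamma_\aA(T) = \one_\aA \tensor T$ (the two realizations of $\Hom_{\SO(n)}(V^{\tensor k},V^{\tensor l})$ agree on the nose once one uses that $\varphi_{k,l}$ for $M = \Gamma_\aA(\pi)$ is induced by $T \mapsto \one_\aA \tensor T$), and is compatible with the $m$'s by~\eqref{eq:compmult} together with the associativity in Lemma~\ref{lem:asso}. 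Restricting to the subobjects cut out by $P_\sigma$ extends this to all of $\mathcal{T}$, and the two unitary tensor functors are thereby naturally monoidally isomorphic. Applying the reconstruction of~\cite[Thm.~2.3]{Ne13} to an isomorphism of unitary tensor functors yields an equivariant \Star-isomorphism $\aA \to \aA_{\Gamma_\aA(\pi)}$ fixing $\aB$, which is what we need. The main obstacle, then, is not any single hard estimate but the bookkeeping in this last step: one must carefully match the concretely-constructed data $(\varphi_{k,l}, m_M)$ attached to $\Gamma_\aA(\pi)$ with the intrinsic data $(\Gamma_\aA, m_\aA)$ of the original system, and confirm that the comparison maps are natural and monoidal — after which the theorem is a direct appeal to the bijection of~\cite[Thm.~2.3]{Ne13} between free C\Star-dynamical systems and unitary tensor functors.
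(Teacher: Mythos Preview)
Your proposal is correct and follows essentially the same approach as the paper, which simply states that the result is obtained by combining Corollary~\ref{cor:main1} with~\cite[Thm.~2.3]{Ne13}. You have carefully unpacked what that combination entails --- well-definedness on both sides, one composite handled directly by Corollary~\ref{cor:main1}, and the other by identifying the unitary tensor functor built from $M=\Gamma_\aA(\pi)$ with the restriction of $(\Gamma_\aA,m_\aA)$ and invoking the reconstruction theorem --- which is exactly the intended argument.
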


\pagebreak[3]
\section{Examples}\label{sex:examples}

This section is devoted to discussing examples.
%of correspondences that are tensorial of type $\pi$.
%In particular, we utilize the fact that free C\Star-dynamical systems with structure group $\SO(n)$, $n \geq 3$, provide a natural source of these correspondences.
%The first part revolves around examples of correspondences that are tensorial of type $\pi$, while the second part exhibits new and interesting examples of free C\Star-dynamical systems with structure group $\SO(n)$, $n \geq 3$.
For expediency we continue to write~$(\pi,V)$ for the standard representation of $\SO(n)$, $n \geq 3$.

%\subsection*{\boldmath Correspondences that are tensorial of type $\pi$}

\enlargethispage{\baselineskip}
\subsection{\boldmath Example: the free module of rank~$n$}\label{ex:trivial}

Let $\aB$ be a unital C\Star-algebra.
In this example we apply ourselves to the tensor product $M := \aB \tensor V$ which is naturally a correspondence over $\aB$ with respect to the canonical left and right actions and the right $\aB$-valued inner product determined by $\rprod{\aB}{b \tensor v, b' \tensor v'} := \scal{v,v'} b^*b'$ for all $b,b' \in \aB$ and $v,v' \in V$. 
Note that, up to the canonical isomorphism $\aB^{\tensor k} \cong \aB$, we have
$M^{\tensor k} = \aB \tensor V^{\tensor k}$ for all integers $k \geq 0$.
For integers $k,l \geq 0$, $T \in C_{k,l}$, and $x \in M^{\tensor k}$ we obtain an element in $M^{\tensor l}$ by putting $\varphi_{k,l}(x) := \id_\aB \tensor T(x)$.
This yields injective linear maps $\varphi_{k,l} : C_{k,l} \to \End\big(M^{\tensor k},M^{\tensor l}\big)$ for all integers $k,l \geq 0$ which make~$M$ tensorial of type~$\pi$, as is easy to check.
From the construction procedure presented in Appendix~\ref{sec:construction} we infer that the corresponding free C\Star-dynamical system $(\aA_M,\SO(n),\alpha_M)$ is equivalent to $(\aB \tensor_{\text{min}} C(\SO(n)),\SO(n),\alpha)$, where $\tensor_{\text{min}}$ denotes the minimal tensor product of C\Star-algebras and the action $\alpha$ is given by right translation in the argument of the second tensor factor.
%$\alpha_g(f)(h) := f(hg)$ for all $f \in \aB \tensor_{\text{min}} C(\SO(n))$ and $g,h \in \SO(n)$.

\subsection{Example: classical vector bundles}\label{ex:VB}

Let $X$ be a compact space and let $q:E \to X$ be a locally trivial hermitian vector bundle with typical fibre $V$, structure group $\SO(n)$, and Hermitian metric $x \mapsto \scal{\cdot,\cdot}_x$. 
In this example we consider the space $M := \Gamma(E)$ of continuous sections of $q:E \to X$ which carries the structure of a correspondence over $C(X)$ with respect to the obvious (bi-)module structure given by pointwise multiplication and the inner product $\rprod{C(X)}{\cdot,\cdot}$ given for $s,t \in \Gamma(E)$ by $\rprod{C(X)}{s,t}(x) := \scal{s(x),t(x)}_x$, $x \in X$.
In case $E$ is trivial, $M \cong C(X) \tensor V$, and therefore we are, up to isomorphism, in the situation of Example~\ref{ex:trivial}.
In particular, we have $\aA_M \cong C(X) \tensor_{\text{min}} C(\SO(n))$ which shows that $\aA_M$ is commutative with character space given by the trivial frame bundle $\Fr(E) \cong X \times \SO(n)$.
In case $E$ is non-trivial, we use bundle charts to conclude similarly that $M$ is tensorial of type~$\pi$.
Let $(\aA_M,\SO(n),\alpha_M)$ be the corresponding free C\Star-dynamical system.
A moment's thought shows that $m(\sigma,\tau) = \textit{flip}(m(\tau,\sigma))$ for all $\sigma,\tau \in \mathcal{T}$, where $\textit{flip}$ denotes the fibrewise tensor flip.
From this it follows that $\aA_M$ is commutative, and hence that $\aA_M \cong C(\Fr(E))$ by the uniqueness (up to isomorphism) of the geometric construction.

\begin{remark}
	If $X$ is a closed orientable manifold, then its tangent space is a locally trivial hermitian vector bundle with typical fibre $V$ and structure group $\SO(n)$.
\end{remark}

\subsection{Example: the quantum projective 7-space}\label{ex:quantumprojective}

In this example we introduce a new free C\Star-dynamical system with structure group $\SO(3)$.
In particular, its associated noncommutative vector bundle with respect to the standard representation $\pi$ of $\SO(3)$ yields another instance of a tensorial correspondence of type $\pi$ (\cf~Remark~\ref{rem:free=pi-corr_tensor}).
To the best of our knowledge, this noncommutative principal bundle has, as of yet, not been considered in the literature.

For a start we recall a noncommutative C\Star-algebraic version of the classical $\SU(2)$-Hopf fibration over the four sphere (see~\cite{LaSu05} for a generalization in the context of Hopf-Galois extensions).
Let $\theta \in \R$ and let $\theta'$ be the skewsymmetric $4 \times 4$-matrix with $\theta_{1,2}' = \theta_{3,4}' = 0$ and $\theta'_{1,3} = \theta'_{1,4} = \theta_{2,3}' = \theta'_{2,4} = \theta/2$. 
The Connes-Landi sphere $\aA(\mathbb S_{\theta'}^7)$ is the universal unital C\Star-algebra generated by normal elements $z_1, \dots, z_4$ subject to the relations
\begin{align*}	
	z_i z_j &= e^{2\pi\imath \theta'_{i,j}} \; z_j z_i, 
	&
	z_j^* z_i &= e^{2\pi \imath \theta'_{i,j}}\;  z_i z_j^*,
	&
	\sum_{k=1}^4 z_k^* z_k^{} &= \one
\end{align*}
for all $1 \le i,j \le 4$. 
By~\cite[Expl.~3.5]{SchWa17}, it comes equipped with a free action $\alpha$ of $\SU(2)$ given for each $U \in \SU(2)$ on generators by
\begin{equation*}
	\alpha_U: (z_1, \dots, z_4) \mapsto (z_1, \dots, z_4) \begin{pmatrix} U & 0 \\ 0 & U \end{pmatrix}.
\end{equation*}
The corresponding fixed point algebra is the universal unital C\Star-algebra $\aA(\mathbb S_\theta^4)$ generated by normal elements $w_1, w_2$ and a self-adjoint element $x$ satisfying
\begin{align*}
	w_1 w_2 &= e^{2\pi\imath\theta} \; w_2 w_1, 
	&
	w_2^* w_1 &= e^{2\pi\imath \theta} \; w_1 w_2^*, 
	&
	w_1^* w_1 + w_2^* w_2 + x^*x &= \one.
\end{align*}
To proceed, we consider the normal subgroup $N := \{\pm \one\} \subseteq \SU(2)$.
\cite[Prop.~3.18]{SchWa15}~implies that the induced C\Star-dynamical system $(\aA(\mathbb S_{\theta'}^7)^N,\SU(2)/N,\alpha\restriction_{\SU(2)/N})$ is free, too.
Hence we arrive at the announced free C\Star-dynamical system with structure group $\SO(3)$ by (simply) putting $\aA(\mathbb P_{\theta'}^7) := \aA(\mathbb S_{\theta'}^7)^N$ and by identifying $\SO(3)$ with $\SU(2)/N$ via the universal covering map $p:\SU(2) \to \SO(3)$, \ie, 
\begin{align*}
	(\aA(\mathbb P_{\theta'}^7),\SO(3),\alpha\restriction_{\SU(2)/N} \circ \, {\bar p}^{-1}),
\end{align*}
$\bar p$ being the induced isomorphism $\SU(2)/N \to \SO(3)$.
Note that $\aA(\mathbb P_{\theta'}^7)^{\SO(3)} = \aA(\mathbb S_\theta^4)$.
Finally, we conclude from Remark~\ref{rem:free=pi-corr_tensor} that $\Gamma_{\aA(\mathbb P_{\theta'}^7)}(\pi)$ is tensorial of type $\pi$.

%\begin{remark}
%	Let $(\sigma_3,V_3)$ be the 3-dimensional irreducible representation of $\SU(2)$.
%	On account of~\cite[Lem.~3.3]{SchWa17}, there exist a finite-dimensional Hilbert space~$\hH$ and an isometry $s \in \aA(\mathbb P_{\theta'}^7) \otimes \End(V_3,\hH)$ satisfying $\alpha_U(s) = s(\one_{\aA(\mathbb S_{\theta'}^7)} \tensor U)$ for all $U \in \SU(2)$ such that 
%	\begin{align*}
%		\Gamma_{\aA(\mathbb P_{\theta'}^7)}(\pi) = s^*\left(\aA(\mathbb S_\theta^4) \tensor \hH\right).
%	\end{align*}
%	In some situations it might be more convenient to work with the right-hand side of the above identity.
%\end{remark}

\subsection{\boldmath Example: the even part of the Cuntz algebra $\mathcal{O}_2$}\label{ex:cuntz}

In this example we present yet another instance of a free C\Star-dynamical system with structure group~$\SO(3)$, and hence of a correspondence that is tensorial of type $\pi$.
Apparently, this free C\Star-dynamical system has neither been considered elsewhere in the literature.

To begin with, we recall that the Cuntz algebra $\mathcal{O}_2$ is the universal unital C\Star-algebra generated by two elements $S_1$ and $S_2$ satisfying $S_i^* S_j = \delta_{ij}$ and $S_1S_1^*+S_2S_2^* = \one$ (\cf~\cite{Cuntz77}).
On account of~\cite[Prop.~8.4]{Gab14}, it comes equipped with a free action $\alpha$ of $\SU(2)$ given for each $U := \begin{psmallmatrix*}[r] a & -\bar b\\ b & \bar a \end{psmallmatrix*} \in \SU(2)$ on generators by
\begin{align*}
%	\alpha_U(S_1) := aS_1 + bS_2,
%	&&
%	\alpha_U(S_2) := -\bar b S_1 + \bar a S_2.
%	\\
	\alpha_U: (S_1,S_2) \mapsto (aS_1 + bS_2,-\bar b S_1 + \bar a S_2).
\end{align*}
Now, the exact same line of arguments as in Example~\ref{ex:quantumprojective} above shows that the induced C\Star-dynamical system $(\mathcal{O}_2^N,\SO(3),\alpha\restriction_{\SU(2)/N} \circ \, {\bar p}^{-1})$ is free and, further, that $\Gamma_{\mathcal{O}_2^N}(\pi)$ is tensorial of type $\pi$.

\appendix

\pagebreak[3]
\section{The construction of the free \texorpdfstring{C$^*$}{C*}-dynamical systems from the unitary tensor functor}\label{sec:construction}

This section contains a brief summary of the construction of the free C\Star-dynamical system $(\aA_M,\SO(n),\alpha_M)$ from the unitary tensor functor $\mathcal{T} \to \corr(\aB)$ put together in~Section~\ref{sec:NCFB}. 
%(\cf~Corollary~\ref{cor:main1}).
Some parts of the construction require us to deal with conjugates.
Indeed, given an irreducible representation $(\sigma,V_\sigma)$ of $\SO(n)$, we identify its conjugate representation $(\bar \sigma,\bar V_\sigma)$ with the equivalent irreducible representation from our initial choice of representatives and denote the latter, by abuse of notation, also by $(\bar \sigma,\bar V_\sigma)$.
%Note that $(\pi,V)$ is self-conjugate.

First, we form an algebra $A_M$.
To do this, we consider the algebraic direct sum
\begin{align*}
	A_M := \bigoplus_{\sigma \in \Irrep(\SO(n))} \Gamma_M(\bar \sigma) \tensor V_\sigma.
\end{align*} 
Moreover, for $\sigma,\tau \in \Irrep(\SO(n))$, $x \tensor v \in \Gamma_M(\bar \sigma) \tensor V_\sigma$, and $y \tensor w \in \Gamma_M(\bar \tau) \tensor V_\tau$ we define a product by the recipe
\begin{equation*}
	(x \tensor v) \bullet (y \tensor w) := \sum_{k=1}^N \left( \Gamma_M\left(\bar S_k\right)^* \tensor S_k^* \right) \left( m(\bar \sigma,\bar \tau)(x \tensor y) \tensor v \tensor w \right)
	\in \sum_{k=1}^N \Gamma_M\left(\bar \sigma_k\right) \tensor V_{\sigma_k},
\end{equation*}
where $\{S_1, \dots, S_N\}$ is a complete set of isometric intertwiners $S_k:V_{\sigma_k} \to V_\sigma \tensor V_\tau$, $\sigma_k \in \Irrep(\SO(n))$, with respective conjugates $\bar S_k:\bar V_{\sigma_k} \to \bar V_\sigma \tensor \bar V_\tau$. 
Extending this product bilinearly yields a multiplication on $A_M$ which is associative due to condition~(iii) in the definition of a~\hyperref[defn:wutfun]{unitary tensor functor}.
Note that $\aB$ can be regarded as the subalgebra of~$A_M$ corresponding to the equivalence class of the trivial representation.
Second, we turn $A_M$ into a \Star-algebra.
For this purpose, let $\sigma \in \Irrep(\SO(n))$.
We define an involutive map $^+ : \Gamma_M(\sigma) \to \Gamma_M(\bar \sigma)$ by setting $x^+ := m_x^* \left( \Gamma_M(R)(\one_\aB) \right)$, where $R:\C \to V_\sigma \tensor \bar V_\sigma$ is any intertwiner and $m_x:\Gamma_M(\bar \sigma) \to \Gamma_M(\sigma \tensor \bar \sigma)$ denotes the map $m_x(y) := m(\sigma,\bar \sigma)(x \tensor y)$.
Now, for $x \tensor v \in \Gamma_M(\bar \sigma) \tensor V_\sigma$ we put $(x \tensor v)^+ := x^+ \tensor \bar v$ and extend this anilinearly to an involutive map on $A_M$.
Third, we equip $A_M$ with the $\SO(n)$-action by \Star-automorphisms, let's say $a_M$, given on each summand $\Gamma_M(\bar \sigma) \tensor V_\sigma$, $\sigma \in \Irrep(\SO(n))$, by the respective unitary representation of $\SO(n)$ on the second tensor factor. 
In summary, we have built a \Star-algebra $A_M$ together with an action of $\SO(n)$ on $A_M$ by \Star-automorphisms.

We proceed by noting that each summand $\Gamma_M(\bar \sigma) \tensor V_\sigma$, $\sigma \in \Irrep(\SO(n))$, is naturally a correspondence over $\aB$ with respect to the canonical $\aB$-bimodule structure and the $\aB$-valued inner product determined by $\rprod{\aB}{x \tensor v, y \tensor w} := \scal{x,y}_\aB \scal{v,w}$ for all $x,y \in \Gamma_M(\bar \sigma)$ and $v,w \in V_\sigma$.
From this it follows that $A_M$ carries the structure of a right pre-Hilbert $\aB$-module.
We write $\hH_M$ for its completion.
It is easy to check that the left multiplication on $A_M$ yields a faithful \Star-representation $\lambda: A_M \to \End(\hH_M)$.
Furthermore, it is immediate that $a_M$ extends to a unitary representation $U_M : \SO(n) \to \mathcal{U}(\hH_M)$.

Now, we are in a position to introduce the free C\Star-dynamical system $(\aA_M,\SO(n),\alpha_M)$.
Indeed, we let $\aA_M$ be the closure of $\lambda(A_M)$ with respect to the operator norm on $\End(\hH_M)$.
Furthermore, $\alpha_M$ is implemented by the unitary representation $U_M$ in the sense that $(\alpha_M)_g(x) = (U_M)_g x (U_M)_g^*$ for all $g \in \SO(n)$ and $x \in \aA_M$.
Finally, $(\aA_M,\SO(n),\alpha_M)$ is free as asserted, because we initially started with a unitary tensor functor.
For a more detailed account of the construction we refer to~\cite[Sec.~2]{Ne13}.

\pagebreak[3]
\section{\boldmath The special case of $\SO(2)$}\label{sec:MC}

In this section we briefly deal with the special case of $\SO(2)$.
More precisely, for a unital C\Star-algebra $\aB$ we show that there is a bijective correspondence between free C\Star-dynamical systems with structure group $\SO(2)$ and fixed point algebra $\aB$ and Morita equivalence $\aB$-bimodules.
We begin by recalling that $\Irrep(\SO(2) \cong \mathbb{Z}$.
%Furthermore, we discuss the relation between free C\Star-dynamical systems with structure group $\SO(2)$ and their associated noncommutative vector bundles with respect to the standard representation of $\SO(2)$.
Now, let $(\aA,\SO(2),\alpha)$ be a free C\Star-dynamical system with fixed point algebra $\aB$.
Each isotypic component $A(k)$, $k \in \Z$, is a  Morita equivalence $\aB$-bimodule with inner products $\lprod{\aB}{x,y} := xy^*$ and $\rprod{\aB}{x,y} := x^*y$ for all $x,y \in A(k)$.
Furthermore, the canonical multiplication maps
\begin{align*}
	\Psi_{k_1,k_2}: A(k_1) \otimes_\aB  A(k_2) \to A(k_1+k_2),
	&&
	x \otimes_\aB y \mapsto xy
\end{align*}
are isomorphisms of Morita equivalence $\aB$-bimodules for all $k_1,k_2 \in \Z$ and the following associativity condition holds for all $k_1,k_2,k_3 \in \Z$:
\begin{align}\label{eq:asso}
	\Psi_{k_1+k_2,k_3} (\Psi_{k_1,k_2} \otimes_\aB \id_{k_3}) = \Psi_{k_1,k_2+k_3} (\id_{k_1} \otimes_\aB \Psi_{k_2,k_3}).
\end{align}
Note that $A(0) = \aB$ and that, up to the canonical isomorphism $\aB \otimes_\aB \aB \cong \aB$, $\Psi_{0,0} = \id_\aB$.
The isotypic components $A(k)$, $k \in \mathbb{Z}$, along with the maps $\Psi_{k_1,k_2}$, $k_1,k_2 \in \mathbb{Z}$, constitute a so-called factor system and allows to reconstruct the C\Star-dynamical system $(\aA,\SO(2),\alpha)$ up to isomorphism (\cf~\cite[Def.~4.5]{SchWa15}).
Conversely, let $\aB$ be a unital C\Star-algebra and let $N$ be a Morita equivalence $\aB$-bimodule with dual module $\bar{N}$ and isomorphisms
\begin{align*}
	\Psi_{1,-1} : N \otimes_\aB \bar{N} \to \aB, && &x \otimes \tilde{y} \mapsto \lprod{\aB}{x,y},
	\shortintertext{and}
	\Psi_{-1,1} : \bar{N} \otimes_\aB N \to \aB, && &\tilde{x} \otimes y \mapsto \rprod{\aB}{x,y}.
\end{align*}
Note that the Morita equivalence condition $\lprod{\aB}{x,y} \acts z = x \acts \rprod{\aB}{y,z}$ for all $x,y,z \in N$ implies that $\Psi_{1,-1} \otimes_\aB \id_N = \id_N \otimes_\aB \Psi_{-1,1}$ and $\Psi_{-1,1} \otimes_\aB \id_{\bar{N}} = \id_{\bar{N}} \otimes_\aB \Psi_{1,-1}$.
The task is now to construct a free C\Star-dynamical system $(\aA_N,\SO(2),\alpha_N)$ with fixed point algebra $\aB$ and $A_N(1) = N$.
For this, we apply~\cite[Thm.~4.21]{SchWa15} which amounts to the construction of a factor system associated with $N$:
First, for each $k \in \Z$ we form a Morita equivalence $\aB$-bimodule $N(k)$ by setting
\begin{align*}
	N(k):=
	\begin{cases}
		\aB & \quad k = 0
		\\
		N^{\otimes k} & \quad k > 0
		\\
		\bar{N}^{\otimes -k} & \quad k < 0.
	\end{cases}
\end{align*}
Second, we define Morita equivalence $\aB$-bimodule isomorphisms
\begin{align*}
	\Psi_{k_1,k_2}: N(k_1) \otimes_\aB  N(k_2) \to N(k_1+k_2)
\end{align*}
for all $k_1,k_2 \in \Z$ in the following way:
For non-negative integers we simply take the tensor product $\otimes_\aB$,  and similarly for negative integers.
Note that $N(0) = \aB$ and that, up to the canonical isomorphism $\aB \otimes_\aB \aB \cong \aB$, $\Psi_{0,0} = \id_\aB$.
To deal with integers of mixed parity, we repeatedly make use of the maps $\Psi_{1,-1}$ and $\Psi_{-1,1}$. 
From $\Psi_{1,-1} \otimes_\aB \id_1 = \id_1 \otimes_\aB \Psi_{-1,1}$ and $\Psi_{-1,1} \otimes_\aB \id_{-1} = \id_{-1} \otimes_\aB \Psi_{1,-1}$, it may be concluded that Equation~\eqref{eq:asso} holds for all $k_1,k_2,k_3 \in \Z$.
Hence the modules $N(k)$, $k \in \mathbb{Z}$, together with the maps $\Psi_{k_1,k_2}$, $k_1,k_2 \in \mathbb{Z}$, form a factor system which gives a free C\Star-dynamical system $(\aA_N,\SO(2),\alpha_N)$ with fixed point algebra $\aB$ and $A(1) = N$ as required.
We proceed by looking at the maps
\begin{align*}
	(\aA,\SO(2),\alpha) \mapsto A(1), 
	&&
	\text{and}
	&&
	N \mapsto (\aA_N,\SO(2),\alpha_N).
\end{align*}
A few moments thought show that these are, in fact, inverse to each other as claimed in the beginning of this section.
Passing over to the set $\Ext(\aB,\SO(2))$ of equivalence classes of free C\Star-dynamical systems with structure group $\SO(2)$ and fixed point algebra $\aB$ (with respect to $\SO(2)$-equivariant isomorphisms over $\aB$) and the set $\Pic(\aB)$ of equivalence classes of Morita equivalence $\aB$-bimodules, the Picard group of $\aB$, we can assert that
\begin{align*}
	\Ext(\aB,\SO(2)) \to \Pic(\aB), 
	&&
	[(\aA,\SO(2),\alpha)] \mapsto [A(1)]. 
\end{align*}
is a bijection.

\begin{remark}
	The above correspondence can also be obtained from a more abstract result involving group cohomology. 
	In fact, since the group cohomology $\text{H}^n_{\text{gr}}(\mathbb{Z},\mathcal{U} Z(\aB))$ vanishes for each $n>1$, the correspondence follows from~\cite[Cor.~5.9 \& Thm.~5.14]{SchWa15}.
\end{remark}

\begin{remark}
	If $\aB= C(X)$ for some compact space $X$ and $N$ is the $C(X)$-module of sections of some line bundle $L$ over $X$, then $\aA_N \cong C(\Fr(L))$ (see, \eg,~\cite[Sec.~6]{SchWa17}).
\end{remark}

\begin{remark}
	For a similar discussion in the purely algebraic setting of strongly graded rings we refer to the opus~\cite[Sec.~5.2.3]{BeMa20}.
\end{remark}

We conclude by establishing a relation between free C\Star-dynamical systems with structure group $\SO(2)$ and their associated noncommutative vector bundles with respect to the standard representation of $\SO(2)$.
For this, we recall that the standard representation of $\SO(2)$ is not irreducible.
In fact, we have $\mathbb{C}^2 = \mathbb{C} (1,\imath)^\intercal \bigoplus \mathbb{C} (1,-\imath)^\intercal$ as $\SO(2)$-modules.

\begin{corollary}
	Let $(\aA,\SO(2),\alpha)$ be a free C\Star-dynamical system.
	Furthermore,  let $\pi$ be the standard representation of $\SO(2)$.
	Then $\Gamma_\aA(\pi) = A(1) \oplus A(-1)$.
	In particular, for a unital C\Star-algebra $\aB$ and a Morita equivalence $\aB$-bimodule $N$ with dual module $\bar{N}$ we have $\Gamma_{\aA_N}(\pi) = N \bigoplus \bar{N}$.
\end{corollary}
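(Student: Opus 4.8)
The proof will be a direct computation from the definition of the associated spaces in~\eqref{eq:NCVB}, combined with the decomposition of the standard representation recalled just above and the reconstruction of $(\aA_N,\SO(2),\alpha_N)$ from a factor system described in this section. Write $\chi_k$ for the character of $\SO(2)$ attached to $k \in \Z \cong \Irrep(\SO(2))$. Since $\C^2 = \C(1,\imath)^\intercal \oplus \C(1,-\imath)^\intercal$ as $\SO(2)$-modules and the two one-dimensional summands are mutually orthogonal and carry the characters $\chi_1$ and $\chi_{-1}$ (the precise order being immaterial), we obtain an orthogonal decomposition $\aA \tensor V_\pi = (\aA \tensor V_{\chi_1}) \oplus (\aA \tensor V_{\chi_{-1}})$ under which the defining fixed-point condition in~\eqref{eq:NCVB} splits summand by summand. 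Hence $\Gamma_\aA(\pi) = \Gamma_\aA(\chi_1) \oplus \Gamma_\aA(\chi_{-1})$ as correspondences over $\aB$.

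Next I would identify each $\Gamma_\aA(\chi_k)$ with an isotypic component. Under the canonical identification $\aA \tensor V_{\chi_k} \cong \aA$, an element $x$ lies in $\Gamma_\aA(\chi_k)$ precisely when $\chi_k(g)\,\alpha_g(x) = x$ for all $g \in \SO(2)$, that is, when $\alpha_g(x) = \overline{\chi_k(g)}\,x$; this is exactly the defining relation of the isotypic component $A(-k)$. Therefore $\Gamma_\aA(\pi) = A(-1) \oplus A(1) = A(1) \oplus A(-1)$, as claimed. The one point that wants care here is the bookkeeping of complex conjugates when matching a character $\chi_k$ with the integer $k$ and with the isotypic component $A(k)$; whatever sign convention is fixed, the two summands produced are $A(1)$ and $A(-1)$, so the statement is unaffected.

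For the final assertion I would appeal to the reconstruction recalled in this section: given a Morita equivalence $\aB$-bimodule $N$ with dual module $\bar{N}$, the associated factor system $\{N(k)\}_{k\in\Z}$ satisfies $N(1) = N$ and $N(-1) = \bar{N}$, and the resulting free C\Star-dynamical system $(\aA_N,\SO(2),\alpha_N)$ has isotypic components $A_N(k) = N(k)$ for all $k \in \Z$. Substituting $A_N(1) = N$ and $A_N(-1) = \bar{N}$ into the formula just established yields $\Gamma_{\aA_N}(\pi) = N \oplus \bar{N}$. No step here presents a genuine obstacle; the sole subtlety is the conventions issue noted above.
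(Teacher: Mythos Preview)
Your argument is correct and is precisely the computation the paper has in mind: the corollary is stated without proof, immediately after recalling the decomposition $\C^2 = \C(1,\imath)^\intercal \oplus \C(1,-\imath)^\intercal$ and the factor-system reconstruction, so the intended justification is exactly the summand-by-summand identification $\Gamma_\aA(\chi_{\pm 1}) \cong A(\mp 1)$ together with $A_N(\pm 1) = N(\pm 1)$ that you spell out. Your remark about the sign bookkeeping is apt and, as you note, harmless for the conclusion.
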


\enlargethispage{\baselineskip}
\section*{Acknowledgement}

The author wishes to express his thanks to Ludwik Dabrowski, Karl-Hermann Neeb, and Sergey Neshveyev, for helpful conversations and correspondence, and he especially thanks Kay Schwieger for numerous technical conversations over the last several years that have indelibly shaped this work.

%\printbibliography
\enlargethispage{\baselineskip}
\bibliographystyle{abbrv}
\bibliography{short,RS}

\begin{thebibliography}{10}

\bibitem{BaCoHa15}
P.~Baum, K.~D. Commer, and P.~M. Hajac.
\newblock Free actions of compact quantum groups on unital {C$^*$}-algebras.
\newblock {\em Doc. Math.}, 22:825--849, 2017.

\bibitem{BeMa20}
E.~Beggs and S.~Majid.
\newblock {\em Quantum Riemannian Geometry}.
\newblock Grundlehren der mathematischen Wissenschaften. Springer International
  Publishing, 2020.

\bibitem{BB06}
B.~Blackadar.
\newblock {\em Operator algebras. {Theory} of {C$^*$}-algebras and von
  {Neumann} algebras}, volume 122 of {\em Encycl. Math. Sci.}
\newblock Berlin: Springer, 2006.

\bibitem{BroDi85}
T.~Br\"{o}cker and T.~tom Dieck.
\newblock {\em Representations of compact {L}ie groups}, volume~98 of {\em
  Grad. Texts Math.}
\newblock Springer-Verlag, New York, 1985.

\bibitem{CoYa13a}
K.~D. Commer and M.~Yamashita.
\newblock A construction of finite index {C$^*$}-algebra inclusions from free
  actions of compact quantum groups.
\newblock {\em Publ. Res. Inst. Math. Sci.}, 49(4):709--735, 2013.

\bibitem{Cuntz77}
J.~Cuntz.
\newblock Simple {{\(C^*\)}}-algebras generated by isometries.
\newblock {\em Commun. Math. Phys.}, 57:173--185, 1977.

\bibitem{EKQR06}
S.~Echterhoff, S.~Kaliszewski, J.~Quigg, and I.~Raeburn.
\newblock A categorical approach to imprimitivity theorems for
  {C$^*$}-dynamical systems.
\newblock {\em Mem. Amer. Math. Soc.}, 180(850):viii+169, 2006.

\bibitem{Ell00}
D.~A. Ellwood.
\newblock A new characterisation of principal actions.
\newblock {\em J. Func. Anal.}, 173(1):49--60, 2000.

\bibitem{EtGENiOs15}
P.~Etingof, S.~Gelaki, D.~Nikshych, and V.~Ostrik.
\newblock {\em Tensor categories}, volume 205 of {\em Math. Surv. Monogr.}
\newblock Providence, RI: American Mathematical Society (AMS), 2015.

\bibitem{Gab14}
O.~Gabriel.
\newblock Fixed points of compact quantum groups actions on cuntz algebras.
\newblock {\em Annales Henri Poincar{\'e}}, 15(5):1013--1036, 2014.

\bibitem{GoWa09}
R.~Goodman and N.~R. Wallach.
\newblock {\em Symmetry, representations, and invariants}, volume 255 of {\em
  Grad. Texts Math.}
\newblock Springer-Verlag, New York, 2009.

\bibitem{HoMo06}
K.~H. Hofmann and S.~A. Morris.
\newblock {\em The structure of compact groups}, volume~25 of {\em De Gruyter
  Studies in Mathematics}.
\newblock De Gruyter, Berlin, 2013.

\bibitem{Huse94}
D.~Husem{\"o}ller.
\newblock {\em Fibre Bundles}, volume~20 of {\em Grad. Texts Math.}
\newblock Springer-Verlag, New York, 1994.

\bibitem{Kob69I}
S.~Kobayashi and K.~Nomizu.
\newblock {\em Foundations of differential geometry. {V}ol. {I}}.
\newblock Wiley Classics Library. John Wiley \& Sons, Inc., New York, 1996.

\bibitem{LaSu05}
G.~Landi and W.~v. Suijlekom.
\newblock Principal fibrations from noncommutative spheres.
\newblock {\em Comm. Math. Phys}, 260(1):203--225, 2005.

\bibitem{Maj99}
S.~Majid.
\newblock Quantum and braided group {Riemannian} geometry.
\newblock {\em J. Geom. Phys.}, 30(2):113--146, 1999.

\bibitem{Maj02}
S.~Majid.
\newblock Riemannian geometry of quantum groups and finite groups with
  nonuniversal differentials.
\newblock {\em Commun. Math. Phys.}, 225(1):131--170, 2002.

\bibitem{Maj05}
S.~Majid.
\newblock Noncommutative {Riemannian} and spin geometry of the standard
  {{\(q\)}}-sphere.
\newblock {\em Commun. Math. Phys.}, 256(2):255--285, 2005.

\bibitem{Mi65}
B.~Mitchell.
\newblock {\em Theory of categories}.
\newblock New {York} and {London}: {Academic} {Press}. {XI}, 273 p. (1965).,
  1965.

\bibitem{Ne13}
S.~Neshveyev.
\newblock Duality theory for nonergodic actions.
\newblock {\em M{\"u}nster J. Math.}, 7(2):414--437, 2013.

\bibitem{NeTu10}
S.~Neshveyev and L.~Tuset.
\newblock {\em Compact quantum groups and their representation categories},
  volume~20 of {\em Cours Sp\'ecialis\'es \emph{[}Specialized Courses\emph{]}}.
\newblock Soci\'et\'e Math\'ematique de France, Paris, 2013.

\bibitem{Ped18}
G.~K. Pedersen.
\newblock {\em {C$^*$}-algebras and their automorphism groups}.
\newblock Pure and Applied Mathematics (Amsterdam). Academic Press, London,
  2018.
\newblock Second edition, Edited and with a preface by S\o ren Eilers and Dorte
  Olesen.

\bibitem{Phi87}
C.~N. Phillips.
\newblock {\em Equivariant {K}-Theory and Freeness of Group Actions on
  {C$^*$}-Algebras}, volume 1274 of {\em Lecture Notes in Math.}
\newblock Springer, 1987.

\bibitem{Rae98}
I.~Raeburn and D.~Williams.
\newblock {\em Morita Equivalence and Continuous-Trace {C$^*$-algebras}},
  volume~60 of {\em Math. Surv. Monogr.}
\newblock Providence, RI: American Mathematical Society (AMS), 1998.

\bibitem{Rieffel91}
M.~A. Rieffel.
\newblock Proper actions of groups on {C$^*$}-algebras.
\newblock In {\em Mappings of Operator Algebras}, volume~84 of {\em Progr.
  Math.}, pages 141--182. Birkh{\"a}user, 1991.

\bibitem{SchWa15}
K.~Schwieger and S.~Wagner.
\newblock {Part} {I}, {Free} actions of compact {A}belian groups on
  {C$^*$}-algebras.
\newblock {\em Adv. Math.}, 317:224--266, 2017.

\bibitem{SchWa17}
K.~Schwieger and S.~Wagner.
\newblock {Part III}, {Free} actions of compact quantum groups on
  {C$^*$}-algebras.
\newblock {\em SIGMA}, 13(062):19 pages, 2017.

\end{thebibliography}

\end{document}